\definecolor{bluecite}{HTML}{0875b7}
\newtheorem{theorem}{Theorem}
\newtheorem{remark}[theorem]{Remark}
\newtheorem{lemma}[theorem]{Lemma}
\newtheorem{proposition}[theorem]{Proposition}
\DeclareMathOperator*{\divergenz}{div}              %
\newcommand{\Ss}{\textnormal{(S}_+\textnormal{)}}
\newcommand*\diff{\mathop{}\!\mathrm{d}}
\newcommand{\N}{\mathbb{N}}
\newcommand{\R}{\mathbb{R}}
\newcommand{\W}{W^{1,\mathcal{H}}_{0}(\Omega)}
\newcommand{\Lp}[1]{L^{#1}(\Omega)}
\newcommand{\Wp}[1]{W^{1,#1}(\Omega)}
\newcommand{\Wpzero}[1]{W^{1,#1}_0(\Omega)}
\newcommand{\eps}{\varepsilon}
\newcommand{\ph}{\varphi}
\newcommand{\into}{\int_{\Omega}}
\newcommand{\weak}{\rightharpoonup}
\newcommand{\close}{\overline{\Omega}}
\renewcommand{\l}{\left}
\renewcommand{\r}{\right}
\numberwithin{theorem}{section}
\numberwithin{equation}{section}
\title[An existence result for singular Finsler double phase problems]{An existence result for singular Finsler double phase problems}
\author[C.\,Farkas]{Csaba Farkas}
\address[C.\,Farkas]{Sapientia Hungarian University of Transylvania, Department of Mathematics and Computer Science, T\^{a}rgu Mure\textcommabelow{s}, Romania}
\email{farkascs@ms.sapientia.ro, farkas.csaba2008@gmail.com}
\author[P.\,Winkert]{Patrick Winkert}
\address[P.\,Winkert]{Technische Universit\"{a}t Berlin, Institut f\"{u}r Mathematik, Stra\ss e des 17.\,Juni 136, 10623 Berlin, Germany}
\email{winkert@math.tu-berlin.de}
\subjclass[2010]{35J15, 35J62, 58B20, 58J60}
\keywords{Anisotropic double phase operator, critical type exponent, existence results, Minkowski space, singular problems}
\begin{document}
	
\begin{abstract}
	In this paper, we study a class of singular double phase problems defined on Minkowski spaces in terms of Finsler manifolds and with right-hand sides that allow a certain type of critical growth for such problems. Under very general assumptions and based on variational tools we prove the existence of at least one nontrivial weak solution for such a problem. This is the first work dealing with a Finsler double phase operator even in the nonsingular case.
\end{abstract}
\maketitle

\section{Introduction}

Let $\Omega \subset \R^N$, $N\geq 2$, be a bounded domain with Lipschitz boundary $\partial \Omega$ and let $F\colon\mathbb R^N\to [0,\infty)$ be a convex function of class $C^2(\mathbb R^N\setminus \{0\})$ which is  absolutely homogeneous of degree one, that is,
\begin{equation*}
F(tx)=|t|F(x)\quad\text{for all } t\in \R \text{ and for all }x\in \R^N.
\end{equation*}
Then, one can consider the functional $E_F\colon H^1_0(\Omega)\to \R$ defined by
\begin{align*}
E_F(u)= \int_\Omega F^2(\nabla u)\diff x \quad \text{for }u\in H^1_0(\Omega),
\end{align*}
which is the energy functional  associated to the so-called Finsler Laplace operator given by
\begin{align*}
\Delta_F u= {\rm div} (F(\nabla u)\nabla F(\nabla u)).
\end{align*}

This class of problems arises in mathematical physics where the
objective is the minimization of $E_F\colon H^1_0(\Omega)\to \R$ under certain constraints (on perimeter or volume) while the solution corresponds to an optimal shape/configuration of anisotropic tension-surface. For instance,  the minimization of the functional $E_F\colon H^1_0(\Omega)\to \R$ explains  the specific
polyhedral shape of crystal structures in solid crystals with
sufficiently small grains, see Dinghas \cite{Dinghas-1944} and Taylor \cite{Taylor-2002}.

In the past years, many authors studied elliptic equations involving the  Finsler Laplace operator.  We refer, for instance, to the works of Cianchi-Salani \cite{Cianchi-Salani}  and Wang-Xia \cite{Wang-Xia-2011} who treated the Serrin-type overdetermined anisotropic problem involving the function $F$ and the operator $\Delta_F$ while  Farkas-Fodor-Krist\'aly \cite{Farkas-Fodor-Kristaly-2015} considered a sublinear elliptic problem with Dirichlet boundary condition.  Further studies of anisotropic phenomena can be found in the papers of Bellettini-Paolini \cite{Bellettini-Paolini-1996}, Belloni-Ferone-Kawohl \cite{Belloni-Ferone-Kawohl-2003}, Della Pietra-Gavitone \cite{DellaPietra-Gavitone-2016}, Della Pietra-di Blasio-Gavitone \cite{DellaPietra-diBlasio-Gavitone-2020}, Della Pietra-Gavitone-Piscitelli \cite{DellaPietra-Gavitone-Piscitelli-2019} and   Ferone-Kawohl \cite{Ferone-Kawohl-2009}; see also the references therein. Nonlinear equations on general Finsler manifolds can be found in Farkas-Krist\'aly-Varga \cite{Farkas-Kristaly-Varga-2015}, Farkas \cite{Farkas-2020} and the references therein.

In this paper we are concerned with the following singular Finsler double phase problem given in the form
\begin{equation}\label{problem}
\begin{aligned}
-\divergenz\left(F^{p-1}(\nabla u)\nabla F(\nabla u) +\mu(x) F^{q-1}(\nabla u)\nabla F(\nabla u)\right) & = u^{p^{*}-1}+\lambda\l(u^{\gamma-1}+ g(u)\r) && \text{in } \Omega,\\
u & > 0 &&\text{in } \Omega,\\
u & = 0 &&\text{on } \partial \Omega,
\end{aligned}
\end{equation}
where $\lambda>0$ is a positive parameter and  the following assumptions are supposed:
\begin{enumerate}
	\item[\textnormal{(H)}]
	\begin{enumerate}
		\item[\textnormal{(i)}]
		$0<\gamma<1$ and
		\begin{align}\label{condition_poincare}
		2 \leq p<q<N, \qquad  \frac{q}{p}<1+\frac{1}{N}, \qquad 0 \leq \mu(\cdot)\in C^{0,1}(\close);
		\end{align}
		\item[\textnormal{(ii)}] 
		the function $g\colon\R\to\R$ is continuous and there exist $1<\theta<p\leq\nu<p^*$ as well as nonnegative constants $c_{i}$, $i=1,2$ such that
		\begin{align*}
		g(s)\leq c_{1}s^{\nu-1}+c_{2}s^{\theta-1}\quad \text{for all } s\geq 0,
		\end{align*}
		where $p^*$ is the critical exponent to $p$ given by
		\begin{align}\label{critical_exponent}
		p^*:=\frac{Np}{N-p};
		\end{align}
		\item[\textnormal{(iii)}]
		the function $F\colon\R^N \to [0,\infty)$ is a positively homogeneous
		Minkowski norm.
	\end{enumerate}
\end{enumerate}

The differential operator in \eqref{problem} is called the Finsler double phase operator. It is defined by
\begin{align}\label{operator_double_phase}
\divergenz\left(F^{p-1}(\nabla u)\nabla F(\nabla u) +\mu(x) F^{q-1}(\nabla u)\nabla F(\nabla u)\right) \quad \text{for }u\in \W,
\end{align}
where $\W$ is a suitable Musielak-Orlicz Sobolev space which is defined in Section \ref{section_2}.  If $\mu\equiv 0$, the operator in \eqref{operator_double_phase} becomes the Finsler $p$-Laplacian (which reduces to the Finsler Laplacian in case $p=2$) and if $\inf_{\close} \mu>0$, we have the weighted Finsler $(q,p)$-Laplacian.  Clearly, when $\mu\equiv0$, \eqref{problem} is strongly related to following Brezis-Nirenberg type problem
\begin{equation*}
\begin{aligned}
-\Delta_{p} u & =|u|^{p^*-2}u+\lambda g(u)\quad && \text{in } \Omega,\\
u & = 0 &&\text{on } \partial \Omega.
\end{aligned}
\end{equation*}

Note that the Finsler $p$-Laplacian is often called anisotropic $p$-Laplacian. Since the word ``anisotropic'' also stands in the literature for problems where the exponent $p$ is a function, we avoid this denotation and use only ``Finsler''.

The operator \eqref{operator_double_phase} is related to the energy functional
\begin{align}\label{integral_minimizer}
\omega \mapsto \int_\Omega \big(F^p(\nabla  \omega)+\mu(x)F^q(\nabla  \omega)\big)\diff x,
\end{align}
where the integrand $H(x,\xi)=F^p(\xi)+\mu(x)F^q(\xi)$ for all $(x,\xi) \in \Omega\times \R^N$ has unbalanced growth, that is,
\begin{align*}
b_1 |\xi|^p \leq F^p(\xi)\leq H(x,\xi) \leq b_2 \l(1+F^q(\xi)\r)\leq  b_3 \l(1+|\xi|^q\r)
\end{align*}
for a.\,a.\,$x\in\Omega$ and for all $\xi\in\R^N$ with $b_1,b_2,b_3>0$. The integral functional \eqref{integral_minimizer} is characterized by the fact that the energy density changes its ellipticity and growth properties according to the point in the domain. To be more precise, the growth rate of \eqref{integral_minimizer} on the set $\{x\in \Omega: \mu(x)=0\}$ is of order $p$ and on the set $\{x\in \Omega: \mu(x) \neq 0\}$ it is of order $q$. So the integrand $H$ switches between two different phases of elliptic behaviours.

When $F$ coincide with the Euclidean norm, Zhikov \cite{Zhikov-1986} was the first who studied functionals of type \eqref{integral_minimizer}  for models which describe strongly anisotropic materials, see also the monograph of Zhikov-Kozlov-Oleinik \cite{Zhikov-Kozlov-Oleinik-1994}. In the last decades, several authors investigated functionals of the form in \eqref{integral_minimizer} concerning regularity of (local) minimizers. We mention the papers of Baroni-Colombo-Mingione \cite{Baroni-Colombo-Mingione-2015,Baroni-Colombo-Mingione-2016,Baroni-Colombo-Mingione-2018}, Baroni-Kuusi-Mingione \cite{Baroni-Kuusi-Mingione-2015}, Co\-lom\-bo-Mingione \cite{Colombo-Mingione-2015a,Colombo-Mingione-2015b},
Marcellini \cite{Marcellini-1989,Marcellini-1991}, Ok \cite{Ok-2018,Ok-2020}, Ragusa-Tachikawa \cite{Ragusa-Tachikawa-2020} and the references therein.

Before we state the main result of the present paper, we recall the notion of the weak solution. A function $u\in \W$ is called a weak solution of problem \eqref{problem} if $u^{\gamma-1}\varphi\in L^{1}(\Omega)$, $u>0$ for a.\,a.\,$x\in\Omega$ and  if
\begin{align*}
\begin{split}
& \int_{\Omega}\left(F^{p-1}(\nabla u)\nabla F(\nabla u) +\mu(x) F^{q-1}(\nabla u)\nabla F(\nabla u)\right)\cdot \nabla\varphi\diff x\\ &=\int_{\Omega}u^{p^{*}-1}\varphi\diff x+\lambda\int_{\Omega}u^{\gamma-1}\varphi\diff x+\lambda\int_{\Omega}g(u)\varphi\diff x
\end{split}
\end{align*}
is satisfied for all $\varphi\in\W$.

Now, we are in the position to formulate our main result.
\begin{theorem}\label{main_theorem}
	Let hypotheses \textnormal{(H)} be satisfied. Then there exists $\lambda_*>0$ such that for every $\lambda\in (0,\lambda_*)$, problem \eqref{problem} has a nontrivial weak solution. 
\end{theorem}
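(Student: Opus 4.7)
The plan is to find a \emph{local} minimizer of the energy functional
\begin{align*}
I_\lambda(u) = \into\!\left(\frac{F^p(\nabla u)}{p}+\mu(x)\frac{F^q(\nabla u)}{q}\right)\diff x -\frac{1}{p^{*}}\into (u^+)^{p^{*}}\diff x -\frac{c}{\gamma}\into (u^+)^{\gamma}\diff x -\lambda\into G(u^+)\diff x,
\end{align*}
where $G(t):=\int_0^t g(s)\diff s$, inside a small closed ball of $\W$. Because $\gamma<1$, $I_\lambda$ fails to be $C^1$, but it is continuous on $\W$; the modular and subcritical parts are sequentially weakly lower semicontinuous, while the critical term is the source of a possible loss of compactness and needs a separate argument.

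\textbf{Step 1 (Local geometry).} Using the modular--norm relations for the Musielak--Orlicz--Sobolev space $\W$ (available because $q/p<1+1/N$ ensures the embedding $\W\hookrightarrow L^{p^{*}}(\Omega)$) together with the subcritical growth assumption on $g$, one obtains for $\|u\|\leq 1$ an estimate of the form
\begin{align*}
I_\lambda(u) \geq \tfrac{1}{q}\|u\|^{q} - C_1\|u\|^{p^{*}} - C_2\|u\|^{\gamma} - \lambda C_3(\|u\|^{\theta}+\|u\|^{\nu}),
\end{align*}
so $I_\lambda$ is bounded below on $\overline{B}_\rho$ for $\rho>0$ small. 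Picking a fixed positive $\phi_0\in\W$ with $\|\phi_0\|=1$ and using $\gamma<p\leq q$ yields
\begin{align*}
I_\lambda(t\phi_0) \leq \tfrac{t^{p}}{p}\into F^{p}(\nabla\phi_0)\diff x + \tfrac{t^{q}}{q}\into \mu F^{q}(\nabla\phi_0)\diff x -\tfrac{c\,t^{\gamma}}{\gamma}\into \phi_0^{\gamma}\diff x + O(t^{\theta}+t^{\nu}+t^{p^{*}}),
\end{align*}
which is negative for $t>0$ small enough, uniformly in $\lambda$ on bounded sets. Hence $m_\lambda:=\inf_{\overline{B}_\rho} I_\lambda<0$.

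\textbf{Step 2 (Attainment of $m_\lambda$).} Take a minimizing sequence $(u_n)\subset\overline{B}_\rho$ and extract $u_n\weak u_*$ in $\W$, $u_n\to u_*$ in $L^r(\Omega)$ for every $r<p^{*}$, and a.e. The modular and subcritical integrals pass to the limit by weak lower semicontinuity (convex integrand) and compact embedding (for the singular and $G$-integrals, which behave well since $\gamma<1<p^{*}$). For the critical term, a Brezis--Lieb decomposition gives
\begin{align*}
m_\lambda \geq I_\lambda(u_*) + \liminf_{n\to\infty}\!\left[\tfrac{1}{q}\rho_{\mathcal{H}}(u_n-u_*) - \tfrac{1}{p^{*}}\into((u_n-u_*)^+)^{p^{*}}\diff x\right],
\end{align*}
and the sharp Sobolev inequality $\|v\|_{p^{*}}\leq S^{-1}\|\nabla v\|_p$ transforms the bracket into a quantity bounded below by $\tfrac{1}{N}S^{N/p}$ whenever concentration occurs. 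Choosing $\lambda_*>0$ so small that $-m_\lambda<\tfrac{1}{N}S^{N/p}$, the concentration alternative is excluded, forcing $u_n\to u_*$ strongly and $I_\lambda(u_*)=m_\lambda<0$. Hence $u_*\not\equiv 0$ and lies in the interior of $\overline{B}_\rho$.

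\textbf{Step 3 (Positivity and the equation).} Since $u_*$ is an interior local minimizer, for arbitrary $\varphi\in\W$ and $t>0$ small one has $I_\lambda(u_*+t\varphi)\geq I_\lambda(u_*)$; dividing by $t$, letting $t\to 0^+$ and controlling the singular part by Fatou's lemma yields both the integrability $u_*^{\gamma-1}\varphi\in L^{1}(\Omega)$ and the weak formulation (first as an inequality, then as an equality by applying the same argument to $-\varphi$, which is admissible once $u_*>0$ in $\Omega$ is known). Positivity is obtained by testing the inequality with $\varphi=-u_*^{-}\in\W$, noting that the singular term drops on $\{u_*\leq 0\}$ and exploiting $F(-\xi)=F(\xi)$ to get $u_*^{-}\equiv 0$; the strong minimum principle for the Finsler double phase operator then upgrades $u_*\geq 0$ to $u_*>0$ in $\Omega$.

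I expect \textbf{Step 2} to be the main obstacle: handling the critical nonlinearity in the Musielak--Orlicz setting demands a Brezis--Lieb-type decomposition adapted to the double phase modular together with the quantitative Sobolev threshold $\tfrac{1}{N}S^{N/p}$, and it is precisely this threshold that dictates the smallness of $\lambda_*$ in the conclusion.
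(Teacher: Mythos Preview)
Your outline follows the same local-minimization strategy as the paper, but Step~2 contains a genuine gap. You invoke a ``Brezis--Lieb decomposition'' giving
\[
m_\lambda \geq I_\lambda(u_*) + \liminf_{n\to\infty}\Big[\tfrac{1}{q}\rho_{\mathcal{H}}(u_n-u_*) - \tfrac{1}{p^{*}}\into((u_n-u_*)^+)^{p^{*}}\diff x\Big],
\]
but Brezis--Lieb requires a.e.\ convergence, and you only have $\nabla u_n\weak\nabla u_*$ weakly; there is no reason for $\rho_{\mathcal{H}}(u_n)=\rho_{\mathcal{H}}(u_*)+\rho_{\mathcal{H}}(u_n-u_*)+o(1)$. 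The paper supplies the missing ingredient: a Minkowskian Lindqvist inequality
\[
F^{r}(\beta)\geq F^{r}(\xi)+rF^{r-1}(\xi)\nabla F(\xi)\cdot(\beta-\xi)+\frac{l_{F}^{r/2}}{2^{r-1}}F^{r}(\beta-\xi)\qquad(r\geq 2),
\]
which gives a one-sided splitting with explicit constant $l_F^{p/2}/2^{p-1}$ on the defect $\int F^p(\nabla(u_n-u_*))$; the cross term vanishes by weak convergence, and the genuine Brezis--Lieb is applied only to the $L^{p^*}$ part, where $u_n\to u_*$ a.e. Your claim that the bracket is bounded below by $\tfrac{1}{N}S^{N/p}$ ``whenever concentration occurs'' is also misplaced: that is a Palais--Smale statement relying on the equation $M=L$ for the bubble, which you do not have for a constrained minimizing sequence. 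Accordingly the paper does not argue via a compactness threshold at all; it works in balls $B_\sigma=\{\|u\|_{1,p,0,F}\leq\sigma\}$ measured in the $p$-seminorm, with $\sigma$ below an explicit $\sigma^*$ depending on $l_F$ and $\kappa_{p^*}$, so that the defect term is nonnegative by size alone and $I_\lambda\big|_{B_\sigma}$ is sequentially weakly lower semicontinuous outright.

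Two further points deserve attention. First, the smallness of $\lambda_*$ in the paper does not come from pushing $-m_\lambda$ below a Sobolev threshold; it comes from a separate Ricceri-type estimate ensuring that the global minimizer on $B_\sigma$ lies in the \emph{interior}. Your Step~1 does not secure this: for small radii the singular term $-\tfrac{c}{\gamma}\|u\|^{\gamma}$ dominates the lower bound, so you cannot conclude $I_\lambda>0$ on $\partial B_\rho$ from the displayed inequality alone. Second, Step~3 appeals to a ``strong minimum principle for the Finsler double phase operator'' that is not available in this generality. The paper instead shows directly that if $u=0$ on a set of positive measure then the difference quotient $t^{-1}(I_\lambda(u+t\varphi)-I_\lambda(u))\to-\infty$, contradicting local minimality, and then passes from the variational inequality to the weak formulation via the Lair--Shaker test function $(u+\varepsilon\varphi)_+$ rather than by substituting $-\varphi$.
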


\begin{remark}
	Since we are looking for positive solutions and hypothesis \textnormal{(H)(ii)} concerns the positive semiaxis $\R_+=[0,+\infty)$, without any loss of generality, we may assume that $g(s)=0$ for all $s\leq 0$.
\end{remark}

To the best of our knowledge, this is the first paper which considers a double phase operator on a Minkowski space in the sense of Finsler geometry, see Bao-Chern-Shen \cite{Bao-Chern-Shen-2000}. Even if $F$ is the Euclidean norm, there is only one work for singular double phase problems done by Chen-Ge-Wen-Cao \cite{Chen-Ge-Wen-Cao-2020} who proved the existence of a nontrivial weak solution with negative energy. Recent existence results for double phase problems in the case if $F$ is the Euclidean norm and without singular term can be found, for example, in the papers of Colasuonno-Squassina \cite{Colasuonno-Squassina-2016}, Liu-Dai \cite{Liu-Dai-2018},  Gasi\'nski-Papageorgiou \cite{Gasinski-Papageorgiou-2019}, Gasi\'nski-Winkert \cite{Gasinski-Winkert-2020a, Gasinski-Winkert-2020b, Gasinski-Winkert-2021}, Perera-Squassina \cite{Perera-Squassina-2019} and the references therein. 

So far, it is not clear what is the optimal critical exponent for Musielak-Orlicz Sobolev spaces even when $F$ is the Euclidean norm, see, for example, the paper of Fan \cite{Fan-2012}. For the moment $p^*$ seems to be best exponent (it is probably not optimal) and only a continuous (in general noncompact) embedding from $\W \hookrightarrow \Lp{p^*}$ is available. So we call it ``type of critical growth''. 

Finally, we mention recent papers which are very close to our topic. We refer to the works of Bahrouni-R\u{a}dulescu-Winkert \cite{Bahrouni-Radulescu-Winkert-2020}, Marino-Winkert \cite{Marino-Winkert-2020}, Papageorgiou-R\u{a}dulescu-Repov\v{s} \cite{Papageorgiou-Radulescu-Repovs-2020}, Papageorgiou-Winkert \cite{Papageorgiou-Winkert-2021,Papageorgiou-Winkert-2019},  Zeng-Bai-Gasi\'nski-Winkert  \cite{Zeng-Gasinski-Winkert-Bai-2020, Zeng-Bai-Gasinski-Winkert-2020} and the references therein.

The paper is organized as follows. In Section \ref{section_2} we recall some basic properties of the Minkowski norm and introduce appropriate Musielak-Orlicz Sobolev spaces equipped with the Minkowski norm $F$. In Section \ref{section_3} we first present the Minkowskian version of the inequality
\begin{align*}
|b|^r \geq |a|^r+r |a|^{r-2}a \cdot (b-a) + 2^{1-r} |a-b|^r\quad \text{for }r\geq 2 \text{ and for all }a,b \in \R^N,
\end{align*}
see Lindqvist \cite{Lindqvist-1990}, which is the basis for the proof of the sequentially weakly lower semicontinuity of the corresponding energy functional to problem \eqref{problem} on closed convex subsets of $\W$ not necessarily being balls. The  sequentially weakly lower semicontinuity is one of the main steps in order to prove our main result. At the end, based on the direct method of calculus along with variational tools, we are going to prove Theorem \ref{main_theorem}. Our approach follows ideas of the paper of Faraci-Farkas \cite{Faraci-Farkas-2015} in which the authors consider a singular problem driven by the $p$-Laplacian, that is, $\mu\equiv0$ and $F(\xi)=\left(\sum_{i=1}^N |\xi_i|^2\right)^{1/2}$. 

\section{Preliminaries}\label{section_2}

In this section we will recall the main properties of the Minkowski space $(\R^N,F)$ and the functional setting for the double phase operator.

Let $F\colon \R^N\to [0,\infty)$ be a positively homogeneous Minkowski norm, that is, $F$ is a positive homogeneous function and verifies the properties
\begin{enumerate}
	\item[$\bullet$] 
	$F\in C^\infty(\mathbb R^N\setminus \{0\})$;
	\item[$\bullet$]  
	The Hessian matrix $\nabla^2 ({F^2}/{2})(x)$ is positive definite for all $x\neq 0$.
\end{enumerate} 

Note that the pair $(\mathbb R^N,F)$ is the simplest not necessarily reversible Finsler manifold whose flag curvature is identically zero, the geodesics are straight lines and the intrinsic distance between two points $x,y\in \mathbb R^N$ is given by
\begin{equation*}
d_F(x,y)=F(y-x).
\end{equation*}
In fact, $(\R^N,d_F)$ is a quasi-metric space and in general, we have $d_F(x,y)\neq d_F(y,x)$. 

A typical example for a Minkowski norm is the so called  Randers metric which is defined by 
\begin{align*}
F(x)=\sqrt{\langle Ax ,x \rangle}+\langle b,x \rangle,
\end{align*}
where $A$ is a  positive definite and symmetric  $(N\times N)$-type matrix and  $b=(b_i)\in \mathbb R^N$ is a fixed vector such that $\sqrt{\langle A^{-1}b,b\rangle}<1$. The pair $(\mathbb R^N,F)$ is often called Randers space which describes the electromagnetic field of the physical space-time in general relativity, see Randers \cite{Randers-1941}. They are deduced as the solution of the Zermelo navigation problem.

In what follows we recall some basic properties of $F$, see Bao-Chern-Shen \cite[\S 1.2]{Bao-Chern-Shen-2000}.

\begin{proposition}\label{basic-properties}
	Let $F\colon\R^N\to [0,\infty)$ be a positively homogeneous Minkowski norm. Then, the following assertions hold true:
	\begin{enumerate}
		\item[\textnormal{(i)}] 
		Positivity: $F(x)>0$ for all $x\neq 0$;
		\item[\textnormal{(ii)}]
		Convexity: $F$ and $F^2$ are strictly convex;
		\item[\textnormal{(iii)}]
		Euler's theorem: $x\cdot \nabla F(x)=F(x)$ and 
		\begin{align*}
		\nabla^2 ({F^2}/{2})({x}){x}\cdot {x}=F^2(x)\quad  \text{for all } x\in\R^N\setminus \{0\};
		\end{align*}
		\item[\textnormal{(iv)}]
		Homogeneity: $\nabla F(tx)=\nabla F(x)$ and 
		\begin{align*}
		\nabla^2 F^2(tx)=\nabla^2 F^2(x)\quad \text{for all }x\in \R^N\setminus \{0\} \text{ and for all } t> 0.
		\end{align*}
	\end{enumerate}
\end{proposition}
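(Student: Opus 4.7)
My plan is to treat items (c) and (d) first, then (a), then (b), because the homogeneity identities of (d) and Euler's identity of (c) feed directly into the positivity argument, and all of these are used implicitly in the discussion of convexity.

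For (c) and (d) I would differentiate the defining 1-homogeneity $F(tx) = tF(x)$ (valid for $t>0$ and $x\neq 0$) in $t$ and in $x$, using the $C^\infty$ smoothness of $F$ on $\R^N\setminus\{0\}$. Differentiating in $t$ at $t=1$ gives $x\cdot \nabla F(x)=F(x)$; differentiating in $x$ and cancelling one power of $t$ yields $\nabla F(tx)=\nabla F(x)$. Applying the same scheme to $F^2(tx)=t^2 F^2(x)$ produces $\nabla F^2(tx)=t\nabla F^2(x)$ and, after a second differentiation in $x$, $\nabla^2 F^2(tx)=\nabla^2 F^2(x)$. Differentiating $\nabla(F^2/2)(tx)=t\nabla(F^2/2)(x)$ in $t$ at $t=1$ and then dotting with $x$, together with Euler applied to the 2-homogeneous function $F^2/2$, yields $\nabla^2(F^2/2)(x)\,x\cdot x=\nabla(F^2/2)(x)\cdot x=F^2(x)$, which is the second identity in (c).

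For (a), I would argue by contradiction. If $F(x_0)=0$ for some $x_0\neq 0$, positive 1-homogeneity forces $F^2/2\equiv 0$ along the whole ray $\{tx_0: t>0\}$, so the second derivative in $t$ of $(F^2/2)(tx_0)$ at $t=1$ vanishes; but by the chain rule that second derivative equals $\nabla^2(F^2/2)(x_0)\,x_0\cdot x_0$, which is strictly positive by the standing assumption on the Hessian, a contradiction.

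For (b), strict convexity of $F^2/2$ (hence of $F^2$) on $\R^N\setminus\{0\}$ is the standard consequence of the pointwise positive definiteness of $\nabla^2(F^2/2)$, and extends by continuity to all of $\R^N$ since $F^2/2$ is continuous and vanishes at $0$. For $F$ itself one cannot expect strict convexity in the naive sense, since 1-homogeneity forces equality along rays; the content of (b) is the triangle inequality $F(x+y)\leq F(x)+F(y)$, with strict inequality for non-positively-collinear $x,y$. I would obtain this from the Finsler Cauchy--Schwarz inequality $\nabla F(x)\cdot y\leq F(y)$, which itself follows from the convexity of $F^2/2$ along the line joining $x$ and $y/F(y)$, as presented in Bao--Chern--Shen \cite{Bao-Chern-Shen-2000}. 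This last step is the main obstacle: the passage from strict convexity of $F^2$ to convexity of $F$ is not purely formal (the square-root is concave) and genuinely uses 1-homogeneity together with the geometry of the Finsler indicatrix, rather than only the pointwise inequality for $F^2$.
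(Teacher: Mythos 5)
The paper does not prove this proposition at all: it is presented explicitly as a recall of standard facts, with the reference ``see Bao--Chern--Shen \cite[\S 1.2]{Bao-Chern-Shen-2000}'', so there is no argument in the text for you to diverge from. That said, the proof you supply is correct and is essentially the textbook one. Items (c) and (d) follow, as you say, by differentiating the homogeneity identities $F(tx)=tF(x)$ and $F^2(tx)=t^2F^2(x)$ in $t$ and in $x$ and invoking Euler's theorem; your derivation of $\nabla^2(F^2/2)(x)\,x\cdot x=F^2(x)$ from the fact that $\nabla(F^2/2)$ is $1$-homogeneous is exactly right. For (a), noting that $F(x_0)=0$ with $x_0\neq 0$ would force $(F^2/2)(tx_0)\equiv 0$ along the ray and hence $\nabla^2(F^2/2)(x_0)\,x_0\cdot x_0=0$, contradicting positive definiteness of the Hessian at $x_0\neq 0$, is clean and complete.

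The most valuable part of your write-up is the caveat on (b), and it is correct. A positively $1$-homogeneous function is affine along each ray, so ``$F$ is strictly convex'' must be read as strict subadditivity (equivalently strict convexity of the indicatrix), with equality precisely for positively collinear arguments; and since the square root is concave, this does not follow formally from the strict convexity of $F^2$. The standard route, which you rightly attribute to \cite{Bao-Chern-Shen-2000}, is to compute the Hessian of $F$ itself, namely $\partial_i\partial_j F=(g_{ij}-\partial_i F\,\partial_j F)/F$ with $g_{ij}=\partial_i\partial_j(F^2/2)$, and use the Cauchy--Schwarz inequality for the fundamental tensor $g$ together with Euler's identities to see that it is positive semidefinite with kernel exactly $\mathbb{R}x$; equivalently one first establishes the fundamental inequality $\nabla F(x)\cdot y\le F(y)$ and then derives subadditivity from $F(x+y)=\nabla F(x+y)\cdot(x+y)$. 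Since you lean on \cite{Bao-Chern-Shen-2000} for precisely this step rather than reproducing it, the argument as written is a correct outline, with the one genuinely non-mechanical point correctly isolated and sourced.
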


For $1\leq r<\infty$, we denote by $\Lp{r}$ and $L^r(\Omega;\R^N)$ the usual Lebesgue spaces equipped with the norm $\|\cdot\|_r$.  If $1<r<\infty$, then $\Wp{r}$ and $\Wpzero{r}$ stand for the Sobolev spaces endowed with the norms

\begin{align*}
\|u\|_{1,r,F}= \|F(\nabla u) \|_{r}+\|u\|_{r},
\end{align*}
and
\begin{align*}
\|u\|_{1,r,0,F}= \|F(\nabla u) \|_{r},
\end{align*}
respectively. Now, let $\mathcal{H}\colon \Omega \times [0,\infty)\to [0,\infty)$ be the function defined by
\begin{align*}
(x,t)\mapsto t^p+\mu(x)t^q,
\end{align*}
where \eqref{condition_poincare} is satisfied. The Musielak-Orlicz space $L^\mathcal{H}(\Omega)$ is defined by
\begin{align*}
L^\mathcal{H}(\Omega)=\left \{u ~ \Big | ~ u: \Omega \to \R \text{ is measurable and } \into \mathcal{H}(x,|u|)\diff x < +\infty \right \}
\end{align*}
equipped with the Luxemburg norm
\begin{align*}
\|u\|_{\mathcal{H}} = \inf \left \{ \tau >0\,:\, \rho_{\mathcal{H}}\left(\frac{u}{\tau}\right) \leq 1  \right \},
\end{align*}
where
\begin{align}\label{modular}
\rho_{\mathcal{H}}(u):=\into \mathcal{H}(x,|u|)\,dx=\into \big(|u|^{p}+\mu(x)|u|^q\big)\diff x.
\end{align}
We know that $L^\mathcal{H}(\Omega)$ is a reflexive Banach space.

The norm $\|\cdot\|_{\mathcal{H}}$ and the modular function $\rho_\mathcal{H}$ are related as follows, see Liu-Dai \cite[Proposition 2.1]{Liu-Dai-2018}.

\begin{proposition}\label{proposition_modular_properties}
	Let \eqref{condition_poincare} be satisfied, let $y\in \Lp{\mathcal{H}}$ and let $\rho_{\mathcal{H}}$ be defined by \eqref{modular}.
	\begin{enumerate}
		\item[\textnormal{(i)}]
		If $y\neq 0$, then $\|y\|_{\mathcal{H}}=\lambda$ if and only if $ \rho_{\mathcal{H}}(\frac{y}{\lambda})=1$;
		\item[\textnormal{(ii)}]
		$\|y\|_{\mathcal{H}}<1$ (resp.\,$>1$, $=1$) if and only if $ \rho_{\mathcal{H}}(y)<1$ (resp.\,$>1$, $=1$);
		\item[\textnormal{(iii)}]
		If $\|y\|_{\mathcal{H}}<1$, then $\|y\|_{\mathcal{H}}^q\leq \rho_{\mathcal{H}}(y)\leq\|y\|_{\mathcal{H}}^p$;
		\item[\textnormal{(iv)}]
		If $\|y\|_{\mathcal{H}}>1$, then $\|y\|_{\mathcal{H}}^p\leq \rho_{\mathcal{H}}(y)\leq\|y\|_{\mathcal{H}}^q$;
		\item[\textnormal{(v)}]
		$\|y\|_{\mathcal{H}}\to 0$ if and only if $ \rho_{\mathcal{H}}(y)\to 0$;
		\item[\textnormal{(vi)}]
		$\|y\|_{\mathcal{H}}\to +\infty$ if and only if $ \rho_{\mathcal{H}}(y)\to +\infty$.
	\end{enumerate}
\end{proposition}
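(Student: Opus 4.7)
The plan is to reduce every item to two elementary facts about $\rho_{\mathcal{H}}$: a sharp scaling estimate that exploits $p \leq q$, and the continuity together with strict monotonicity of $\tau \mapsto \rho_{\mathcal{H}}(y/\tau)$ for $y\neq 0$. The scaling lemma reads: since
\begin{align*}
\rho_{\mathcal{H}}(ty) = t^p \int_\Omega |y|^p \,dx + t^q \int_\Omega \mu(x)|y|^q\,dx
\end{align*}
and $\mu \geq 0$ with $p \leq q$, one obtains
\begin{align*}
t^q \rho_{\mathcal{H}}(y) \leq \rho_{\mathcal{H}}(ty) \leq t^p \rho_{\mathcal{H}}(y) \quad \text{for } t \in (0,1],
\end{align*}
and the reverse inequalities
\begin{align*}
t^p \rho_{\mathcal{H}}(y) \leq \rho_{\mathcal{H}}(ty) \leq t^q \rho_{\mathcal{H}}(y) \quad \text{for } t \geq 1,
\end{align*}
simply by bounding $t^p$ against $t^q$ termwise. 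Separately, for fixed $y\neq 0$ the function $\varphi(\tau):=\rho_{\mathcal{H}}(y/\tau) = \tau^{-p}\|y\|_p^p + \tau^{-q}\int_\Omega \mu|y|^q\,dx$ is continuous and strictly decreasing on $(0,\infty)$, tends to $+\infty$ as $\tau \to 0^+$ (the $\|y\|_p^p$ contribution blows up because $y\not\equiv 0$), and to $0$ as $\tau \to \infty$.

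From these two ingredients the six assertions fall in order. For (i), the intermediate value theorem applied to $\varphi$ yields a unique $\tau_0>0$ with $\rho_{\mathcal{H}}(y/\tau_0)=1$, and strict monotonicity forces $\|y\|_{\mathcal{H}}=\tau_0$, giving both directions of the equivalence. For (iii), setting $\lambda=\|y\|_{\mathcal{H}}<1$, part (i) gives $\rho_{\mathcal{H}}(y/\lambda)=1$; writing $y=\lambda\cdot(y/\lambda)$ and invoking the sub-unit scaling bound produces $\lambda^q \leq \rho_{\mathcal{H}}(y)\leq \lambda^p$. Part (iv) is symmetric with $\lambda>1$ and the super-unit scaling bound.

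The equivalences in (ii) then come for free: if $\|y\|_{\mathcal{H}}<1$, then (iii) forces $\rho_{\mathcal{H}}(y)\leq\lambda^p<1$; conversely, if $\rho_{\mathcal{H}}(y)<1$, choose $\tau=\rho_{\mathcal{H}}(y)^{1/q}\in(0,1)$ and use the super-unit scaling on $y/\tau$ to get $\rho_{\mathcal{H}}(y/\tau)\leq \tau^{-q}\rho_{\mathcal{H}}(y)=1$, whence $\|y\|_{\mathcal{H}}\leq\tau<1$. The cases $>1$ and $=1$ are analogous, with continuity of $\varphi$ covering the equality case. Finally, (v) and (vi) are pure sandwich arguments: once $\|y\|_{\mathcal{H}}<1$ eventually, (iii) gives $\|y\|_{\mathcal{H}}^q\leq\rho_{\mathcal{H}}(y)\leq\|y\|_{\mathcal{H}}^p$, and letting either quantity tend to $0$ forces the other to do so; the same with (iv) handles the divergence to $+\infty$.

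There is no serious obstacle here; the only care required is bookkeeping the direction of each inequality when $p\leq q$ and remembering that the sub-unit and super-unit regimes swap the roles of $p$ and $q$. The whole proof is essentially a careful unpacking of the definitions, once the scaling lemma and the monotonicity of $\varphi$ are in hand.
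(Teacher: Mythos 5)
Your proof is correct. Note, however, that the paper does not prove this proposition itself; it simply cites Liu--Dai [Proposition~2.1], so there is no internal argument to compare against. Your reduction to the two-sided scaling estimate
\begin{align*}
\min\{t^p,t^q\}\,\rho_{\mathcal{H}}(y)\leq\rho_{\mathcal{H}}(ty)\leq\max\{t^p,t^q\}\,\rho_{\mathcal{H}}(y),\qquad t>0,
\end{align*}
together with continuity and strict monotonicity of $\tau\mapsto\rho_{\mathcal{H}}(y/\tau)$ for $y\neq 0$, is exactly the standard route in the Musielak--Orlicz literature and is what the cited source does in substance, so you have essentially reproduced the expected argument rather than found a new one. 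Two small points worth tidying: in (iii) and (v) you implicitly assume $y\neq 0$ when you divide by $\lambda=\|y\|_{\mathcal{H}}$; the case $y=0$ should be dispatched separately (both sides vanish). Also, in the converse direction of the ``$<1$'' case of (ii), your choice $\tau=\rho_{\mathcal{H}}(y)^{1/q}$ works, but it is cleaner to observe that the three mutually exclusive, exhaustive cases $\|y\|_{\mathcal{H}}<1$, $=1$, $>1$ map bijectively via (i), (iii), (iv) onto $\rho_{\mathcal{H}}(y)<1$, $=1$, $>1$, so the reverse implications come for free by trichotomy without any explicit scaling.
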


The corresponding Sobolev space $W^{1,\mathcal{H}}(\Omega)$ is defined by
\begin{align*}
W^{1,\mathcal{H}}(\Omega)= \left \{u \in L^\mathcal{H}(\Omega) \,:\, F(\nabla u) \in L^{\mathcal{H}}(\Omega) \right\}
\end{align*}
equipped with the norm
\begin{align*}
\|u\|_{1,\mathcal{H},F}= \|F(\nabla u) \|_{\mathcal{H}}+\|u\|_{\mathcal{H}}.
\end{align*}
By $W^{1,\mathcal{H}}_0(\Omega)$ we denote the completion of $C^\infty_0(\Omega)$ in $W^{1,\mathcal{H}}(\Omega)$ and thanks to \eqref{condition_poincare} we have an equivalent norm on $W^{1,\mathcal{H}}_0(\Omega)$ given by
\begin{align*}
\|u\|_{1,\mathcal{H},0,F}=\|F(\nabla u)\|_{\mathcal{H}},
\end{align*}
see Colasuonno-Squassina \cite[Proposition 2.18]{Colasuonno-Squassina-2016}.
Both spaces $W^{1,\mathcal{H}}(\Omega)$ and $W^{1,\mathcal{H}}_0(\Omega)$ are uniformly convex and so, reflexive Banach spaces. 

Let $\rho_{\mathcal{H},F}\colon \W\to\R$ be the corresponding modular to $\W$ defined by
\begin{align}\label{modular2}
\rho_{\mathcal{H},F}(u):=\into \big(F^p(\nabla u)+\mu(x)F^q(\nabla u)\big)\diff x.
\end{align}
Similar to Proposition \ref{proposition_modular_properties}, we have the same relations between the norm $\|\cdot\|_{1,\mathcal{H},0,F}$ and the modular $\rho_{\mathcal{H},F}$.
\begin{proposition}
	Let \eqref{condition_poincare} and \textnormal{(H)(iii)} be satisfied and let $\rho_{\mathcal{H},F}$ be defined by \eqref{modular2}. For $y\in \W$ we have the following assertions.
	\begin{enumerate}
		\item[\textnormal{(i)}]
		If $y\neq 0$, then $\|y\|_{1,\mathcal{H},0,F}=\lambda$ if and only if $ \rho_{\mathcal{H},F}(\frac{y}{\lambda})=1$;
		\item[\textnormal{(ii)}]
		$\|y\|_{1,\mathcal{H},0,F}<1$ (resp.\,$>1$, $=1$) if and only if $ \rho_{\mathcal{H},F}(y)<1$ (resp.\,$>1$, $=1$);
		\item[\textnormal{(iii)}]
		If $\|y\|_{1,\mathcal{H},0,F}<1$, then $\|y\|_{1,\mathcal{H},0,F}^q\leq \rho_{\mathcal{H},F}(y)\leq\|y\|_{1,\mathcal{H},0,F}^p$;
		\item[\textnormal{(iv)}]
		If $\|y\|_{1,\mathcal{H},0,F}>1$, then $\|y\|_{1,\mathcal{H},0,F}^p\leq \rho_{\mathcal{H},F}(y)\leq\|y\|_{1,\mathcal{H},0,F}^q$;
		\item[\textnormal{(v)}]
		$\|y\|_{1,\mathcal{H},0,F}\to 0$ if and only if $ \rho_{\mathcal{H},F}(y)\to 0$;
		\item[\textnormal{(vi)}]
		$\|y\|_{1,\mathcal{H},0,F}\to +\infty$ if and only if $ \rho_{\mathcal{H},F}(y)\to +\infty$.
	\end{enumerate}
\end{proposition}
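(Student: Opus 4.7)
The claim is essentially a direct transfer of Proposition \ref{proposition_modular_properties} from $L^{\mathcal{H}}(\Omega)$ to $\W$ via the substitution $v:=F(\nabla y)$. The plan is to exploit the fact that by construction of the norm and modular,
\[
\|y\|_{1,\mathcal{H},0,F}=\|F(\nabla y)\|_{\mathcal{H}}=\|v\|_{\mathcal{H}},\qquad \rho_{\mathcal{H},F}(y)=\into\bigl(|v|^{p}+\mu(x)|v|^{q}\bigr)\diff x=\rho_{\mathcal{H}}(v),
\]
where the last equality uses that $v=F(\nabla y)\geq 0$ by Proposition \ref{basic-properties}(a), so $|v|=v$. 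Since $v\in L^{\mathcal{H}}(\Omega)$ whenever $y\in\W$, each of the assertions (ii)--(vi) then follows immediately by applying the corresponding item of Proposition \ref{proposition_modular_properties} to $v$.

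The only step that requires a small additional observation is (i), where we need to rescale $y$ by $\lambda>0$ inside the modular. Here I would invoke the \emph{positive} homogeneity of $F$ from hypothesis (H)(iii): for $\lambda>0$,
\[
F\bigl(\nabla(y/\lambda)\bigr)=F\bigl(\lambda^{-1}\nabla y\bigr)=\lambda^{-1}F(\nabla y)=\lambda^{-1}v.
\]
Hence $\rho_{\mathcal{H},F}(y/\lambda)=\rho_{\mathcal{H}}(v/\lambda)$, and Proposition \ref{proposition_modular_properties}(i) applied to $v$ yields $\|y\|_{1,\mathcal{H},0,F}=\|v\|_{\mathcal{H}}=\lambda \Longleftrightarrow \rho_{\mathcal{H},F}(y/\lambda)=\rho_{\mathcal{H}}(v/\lambda)=1$.

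Because the proof is a mechanical translation, there is no genuine obstacle; the one point that deserves attention is making sure to use \emph{positive} (not absolute) homogeneity of the Minkowski norm when dividing by $\lambda>0$, and to note that $F(\nabla y)\geq 0$ so that $\rho_{\mathcal{H},F}(y)=\rho_{\mathcal{H}}(F(\nabla y))$ holds with no absolute values. Once these two checks are made, items (i)--(vi) reduce verbatim to the corresponding statements of Proposition \ref{proposition_modular_properties} for $v=F(\nabla y)\in L^{\mathcal{H}}(\Omega)$.
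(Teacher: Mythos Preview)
Your proposal is correct and matches the paper's intent: the paper does not supply a separate proof but simply remarks that the relations are the same as in Proposition~\ref{proposition_modular_properties}, and your substitution $v=F(\nabla y)$ is exactly the reduction that justifies this. The only small point you might add for completeness is that $y\neq 0$ in $\W$ forces $v=F(\nabla y)\neq 0$ in $L^{\mathcal{H}}(\Omega)$ (since $F(\nabla y)=0$ a.e.\ would give $\nabla y=0$ a.e.\ by Proposition~\ref{basic-properties}(a), hence $y=0$ in $\W$), so that Proposition~\ref{proposition_modular_properties}(i) indeed applies to $v$.
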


In addition it is known that the embedding
\begin{align*}
\W \hookrightarrow \Lp{r}
\end{align*}
is continuous whenever $r\leq p^*$ and compact whenever $r<p^*$, see Colasuonno-Squassina \cite[Proposition 2.15]{Colasuonno-Squassina-2016}. Recall that $p^*$ is the critical exponent to $p$, see \eqref{critical_exponent}.

Let $A\colon \W\to \W^*$ be the Finsler double phase operator defined by
\begin{align}\label{operator_representation}
\langle A(u),\ph\rangle_{\mathcal{H},F} :=\into \left(F^{p-1}(\nabla u)\nabla F(\nabla u) +\mu(x) F^{q-1}(\nabla u)\nabla F(\nabla u) \right)\cdot\nabla\ph \diff x,
\end{align}
where $\langle \cdot,\cdot\rangle_{\mathcal{H},F}$ is the duality pairing between $\W$ and its dual space $\W^*$. The properties of the operator $A\colon \W\to \W^*$ are summarized in the following proposition, see Liu-Dai \cite{Liu-Dai-2018}, by taking the properties of $F$ into account.

\begin{proposition}
	The operator $A$ defined by \eqref{operator_representation} is bounded, continuous, monotone (hence maximal monotone) and of type $\Ss$.
\end{proposition}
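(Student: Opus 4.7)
The plan is to verify the four properties of $A$ one by one, working from the pointwise representation of the vector field
\[
\mathcal{A}(x,\xi) := F^{p-1}(\xi)\nabla F(\xi) + \mu(x) F^{q-1}(\xi)\nabla F(\xi).
\]
Because $\nabla F$ is $0$-homogeneous (Proposition \ref{basic-properties}(d)) and smooth on $\R^N \setminus \{0\}$, its Euclidean norm is uniformly bounded on the unit sphere, which yields the growth bound $|\mathcal{A}(x,\xi)| \leq C\bigl(F^{p-1}(\xi) + \mu(x) F^{q-1}(\xi)\bigr)$. Boundedness of $A\colon \W \to \W^*$ is then immediate from Hölder's inequality and the modular estimates of Proposition \ref{proposition_modular_properties}. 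For continuity, if $u_n \to u$ in $\W$, I would extract an a.e.\ convergent subsequence of the gradients and apply Vitali's theorem in the Musielak-Orlicz setting to pass to the limit inside $\langle A(u_n), \varphi\rangle_{\mathcal{H},F}$; a standard subsequence-extraction argument upgrades the convergence to the full sequence.

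For monotonicity, since $F^p$ and $F^q$ are convex by Proposition \ref{basic-properties}(b), the integrand $\xi \mapsto \tfrac{1}{p}F^p(\xi) + \tfrac{\mu(x)}{q}F^q(\xi)$ is convex in $\xi$ for a.e.\ $x$, so the functional
\[
J(u) := \into \biggl[\tfrac{1}{p}F^p(\nabla u) + \tfrac{\mu(x)}{q}F^q(\nabla u)\biggr]\diff x
\]
is convex and Gâteaux differentiable on $\W$ with $J'(u) = A(u)$. Hence $A$ is monotone, and combined with the demicontinuity established above, the classical Browder-Minty criterion for hemicontinuous monotone operators on reflexive Banach spaces delivers maximal monotonicity.

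The main obstacle is the $(\Ss_+)$-property, whose proof hinges entirely on the Minkowskian Lindqvist inequality anticipated in Section \ref{section_3}: for $r \geq 2$ and all $a,b \in \R^N$,
\[
F^r(b) \geq F^r(a) + r F^{r-1}(a)\nabla F(a) \cdot (b-a) + 2^{1-r} F^r(b-a).
\]
Interchanging $a$ and $b$ and adding produces the Finsler ellipticity estimate
\[
\bigl(F^{r-1}(a)\nabla F(a) - F^{r-1}(b)\nabla F(b)\bigr) \cdot (a-b) \geq \tfrac{2^{2-r}}{r}\, F^r(a-b).
\]
Applying this pointwise with $r = p$ and with $r = q$ (weighted by $\mu(x) \geq 0$), which is permitted since \eqref{condition_poincare} forces $2 \leq p < q$, and integrating, I would obtain the uniform modular ellipticity
\[
\langle A(u) - A(v), u-v\rangle_{\mathcal{H},F} \geq c_0\, \rho_{\mathcal{H},F}(u-v), \qquad c_0 := \min\bigl\{2^{2-p}/p,\; 2^{2-q}/q\bigr\} > 0.
\]
Suppose now $u_n \weak u$ in $\W$ with $\limsup_{n\to\infty}\langle A(u_n), u_n-u\rangle_{\mathcal{H},F} \leq 0$. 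Since $A(u) \in \W^*$ and $u_n - u \weak 0$, the term $\langle A(u), u_n-u\rangle_{\mathcal{H},F}$ tends to zero, so $\limsup \langle A(u_n) - A(u), u_n-u\rangle_{\mathcal{H},F} \leq 0$. The displayed ellipticity then forces $\rho_{\mathcal{H},F}(u_n-u) \to 0$, which by the modular-norm equivalence stated immediately before the claim means $\|u_n - u\|_{1,\mathcal{H},0,F} \to 0$. The full difficulty is concentrated in producing the Minkowskian Lindqvist inequality; once that is established, everything else assembles routinely.
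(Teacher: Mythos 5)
Your sketch is sound in structure and essentially fills a gap the paper leaves open: the paper's ``proof'' of this proposition is a one-line citation of Liu--Dai, ``by taking the properties of $F$ into account,'' whereas you supply the adaptation. The decomposition you use --- growth estimate and H\"older for boundedness, Vitali/dominated convergence after a.e.\ extraction for continuity, convexity of the integrand for monotonicity, Browder--Minty for maximality, and a pointwise Simon-type inequality plus the modular--norm dictionary for $(\Ss_+)$ --- is the standard template, and because $2\le p<q$ you are entitled to the ``$r\ge 2$'' branch, so everything assembles cleanly. Two corrections are needed, however. First, the inequality you invoke is \emph{not} the Minkowskian Lindqvist inequality of Lemma~3.1: you wrote the Euclidean constant $2^{1-r}$, but in the Finsler setting the correct remainder is $\tfrac{l_F^{r/2}}{2^{r-1}}F^r(\beta-\xi)$, where $l_F\in(0,1]$ is the uniformity constant; with an anisotropic $F$ the inequality with $2^{1-r}$ is false. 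The downstream ellipticity constant is therefore $c_0=\min\bigl\{l_F^{p/2}2^{2-p}/p,\;l_F^{q/2}2^{2-q}/q\bigr\}$, still positive (since $l_F>0$ for a Minkowski norm by compactness of the indicatrix), so the $(\Ss_+)$ conclusion survives, but the stated inequality must be corrected. Second, since $F$ is only \emph{positively} homogeneous, $F(a-b)\ne F(b-a)$ in general; the symmetrization step ``interchange $a$ and $b$ and add'' yields $F^r(a-b)+F^r(b-a)$ on the right, and you should keep only one of the two terms (dropping the factor $2$), or state that the estimate holds with $\tfrac{1}{r}\cdot\tfrac{l_F^{r/2}}{2^{r-1}}F^r(a-b)$. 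Neither issue changes the qualitative conclusion, but both should be fixed for the argument to be literally correct. Worth noting: your strong-monotonicity route actually gives a sharper conclusion than mere $(\Ss_+)$ --- it yields a coercive modular estimate --- which the paper's cited reference proves by a longer case distinction valid for all $p>1$; restricting to $p\ge2$ buys you this much shorter argument.
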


\section{Proof of the main result}\label{section_3}

In this section we present the proof of Theorem \ref{main_theorem}. We start with a lemma which is the Minkowskian version of the Lindqvist inequality; see Lindqvist \cite{Lindqvist-1990}. Note that this inequality was proved for general Finsler manifolds by the first author \cite{Farkas-2020}. For the sake of completeness we will also provide the proof here.

\begin{lemma}
	Let $F\colon\R^N\to [0,\infty)$ be a positively homogeneous Minkowski norm. For every $\xi,\beta\in\mathbb{\mathbb{R}}^{N}$ and for every $r\geq2$ we have
	\begin{equation}\label{eq:important}
	F^{r}(\beta)\geq F^{r}(\xi)+rF^{r-1}(\xi)\nabla F(\xi)\cdot \l(\beta-\xi\r)+\frac{l_{F}^{\frac{r}{2}}}{2^{r-1}}F^{r}(\beta-\xi),
	\end{equation}
	where 
	\begin{align*}
	l_{F}=\min\left\{  \nabla^2\left(\frac{F^{2}}{2}\right)(x)y\cdot y \,:\, F(x)=F(y)=1\right\}.
	\end{align*}
	
\end{lemma}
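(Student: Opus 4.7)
The plan is to imitate Lindqvist's classical proof of the Euclidean version by using the uniform convexity of $F^{2}/2$ (quantified by $l_{F}$) as a replacement for the parallelogram identity. The argument proceeds in four steps: a quadratic ($r=2$) uniform convexity estimate, a Minkowskian midpoint inequality of Clarkson type, an upgrade to the $r$-th power via two elementary power-mean inequalities, and a final application of the convexity of $F^{r}$.

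First I would establish that for every $\xi,\beta\in\R^{N}$,
\begin{equation*}
F^{2}(\beta) \geq F^{2}(\xi) + 2F(\xi)\nabla F(\xi)\cdot(\beta-\xi) + l_{F}\,F^{2}(\beta-\xi).
\end{equation*}
This is a Taylor expansion with integral remainder of $F^{2}/2$ around $\xi$: by Euler's identity (Proposition \ref{basic-properties}(c)) one has $\nabla(F^{2}/2) = F\nabla F$, and the $0$-homogeneity of $\nabla^{2}(F^{2}/2)$ together with the definition of $l_{F}$ gives $\nabla^{2}(F^{2}/2)(x)y\cdot y \geq l_{F}F^{2}(y)$ for every $x\in\R^{N}\setminus\{0\}$ and $y\in\R^{N}$. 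Applying this estimate at the midpoint $m = (\xi+\beta)/2$ to the pairs $(m,\xi)$ and $(m,\beta)$ and averaging, the linear terms cancel because $(\xi-m)+(\beta-m)=0$, which produces the Minkowskian parallelogram inequality
\begin{equation*}
F^{2}\!\left(\tfrac{\xi+\beta}{2}\right) + \tfrac{l_{F}}{2}\,F^{2}\!\left(\tfrac{\beta-\xi}{2}\right) \leq \tfrac{1}{2}\bigl(F^{2}(\xi)+F^{2}(\beta)\bigr).
\end{equation*}

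Since $r\geq 2$, one has the elementary superadditivity $a^{r/2}+b^{r/2}\leq (a+b)^{r/2}$ for $a,b\geq 0$ and the convex Jensen inequality $((u+v)/2)^{r/2}\leq (u^{r/2}+v^{r/2})/2$. Chaining these with the Minkowskian parallelogram yields a Clarkson-type inequality of the form
\begin{equation*}
F^{r}\!\left(\tfrac{\xi+\beta}{2}\right) + \Bigl(\tfrac{l_{F}}{2}\Bigr)^{r/2}F^{r}\!\left(\tfrac{\beta-\xi}{2}\right) \leq \tfrac{1}{2}\bigl(F^{r}(\xi)+F^{r}(\beta)\bigr).
\end{equation*}
Finally, since $F^{r}$ is convex and of class $C^{1}$ on $\R^{N}\setminus\{0\}$, the tangent-line inequality at $\xi$ reads
\begin{equation*}
F^{r}\!\left(\tfrac{\xi+\beta}{2}\right) \geq F^{r}(\xi) + \tfrac{r}{2}F^{r-1}(\xi)\nabla F(\xi)\cdot(\beta-\xi),
\end{equation*}
and substituting into the Clarkson inequality, multiplying by $2$, and using $F^{r}((\beta-\xi)/2) = F^{r}(\beta-\xi)/2^{r}$ produces \eqref{eq:important}.

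The main obstacle I anticipate is extracting the precise constant $l_{F}^{r/2}/2^{r-1}$: a naive chaining of the four steps loses an extra factor of $2^{r/2}$, because the Minkowskian parallelogram above retains only the contribution of $F^{2}((\beta-\xi)/2)$ and discards the twin term $F^{2}((\xi-\beta)/2)$, which need not equal the first when $F$ is only positively homogeneous. To close this gap one should either keep both contributions in the symmetrized uniform-convexity estimate and combine them optimally when passing to the power $r/2$, or perform a direct case analysis based on the sign of $\nabla F(\xi)\cdot(\beta-\xi)$. Apart from this bookkeeping the remaining computations are routine algebraic manipulations.
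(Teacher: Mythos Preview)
Your plan follows the same skeleton as the paper's: a quadratic midpoint inequality for $F^{2}$, an upgrade to the $r$-th power via the elementary embedding $(a^{r}+b^{r})^{1/r}\le(a^{2}+b^{2})^{1/2}$ and the convexity of $t\mapsto t^{r/2}$, and finally the tangent-line inequality for $F^{r}$. The only substantive difference lies in how the midpoint inequality is reached, and this is exactly where your flagged constant loss occurs.

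The paper does not pass through your step~1 (the $r=2$ Lindqvist inequality) followed by averaging at the midpoint. It invokes directly the zero-order uniform-convexity estimate
\[
F^{2}\bigl(t\xi+(1-t)\beta\bigr)\le tF^{2}(\xi)+(1-t)F^{2}(\beta)-l_{F}\,t(1-t)\,F^{2}(\beta-\xi),
\]
and specialises to $t=\tfrac12$, which already gives $F^{2}\!\bigl(\tfrac{\xi+\beta}{2}\bigr)\le\tfrac12\bigl(F^{2}(\xi)+F^{2}(\beta)\bigr)-\tfrac{l_{F}}{4}F^{2}(\beta-\xi)$ with the sharp factor $l_{F}/4$; your steps~3 and~4 then run verbatim and produce $l_{F}^{r/2}/2^{r-1}$. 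You already possess the ingredient for this shortcut: the Hessian bound $\nabla^{2}(F^{2}/2)(x)\,y\cdot y\ge l_{F}F^{2}(y)$ that you record in step~1 shows that $t\mapsto F^{2}(\beta+t(\xi-\beta))$ has second derivative at least $2l_{F}F^{2}(\beta-\xi)$, which yields the displayed midpoint inequality at once. This bypasses your averaging detour and the constant problem disappears.

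If you prefer to keep the averaging route, your proposed fix (retain both contributions) does work, but for a simpler reason than you suggest. The Hessian is a symmetric bilinear form, so
\[
\nabla^{2}(F^{2}/2)(x)\,y\cdot y=\nabla^{2}(F^{2}/2)(x)\,(-y)\cdot(-y)\ge l_{F}\max\{F^{2}(y),F^{2}(-y)\}.
\]
Hence your Taylor estimate in step~1 holds with \emph{either} $F^{2}(\beta-\xi)$ or $F^{2}(\xi-\beta)$ in the remainder, even when $F$ is only positively homogeneous. Choosing the favourable sign in each of the two applications at $m$ makes both remainder terms equal to $l_{F}F^{2}\!\bigl(\tfrac{\beta-\xi}{2}\bigr)$, and the average then carries the factor $l_{F}/4$ as in the paper. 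No case analysis on the sign of $\nabla F(\xi)\cdot(\beta-\xi)$ is needed.
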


\begin{proof}
	From the definition of $l_F$, one has that 
	\begin{equation}\label{lFimportant}
	F^{2}({t\xi+(1-t)\beta} ) \le tF^{2}(\xi) +(1-t)F^{2}(\beta)
	-{l_{F}}t(1-t) F^{2}(\beta-\xi) 
	\end{equation}
	for all $\xi,\beta\in \R^N$ and for all $t \in [0,1]$, see Farkas-Krist\'{a}ly-Varga \cite[(2.19) on p. 1228]{Farkas-Kristaly-Varga-2015} or Ohta \cite{Ohta-2009}.
	Choosing $t=\frac{1}{2}$ in \eqref{lFimportant}  gives 
	\begin{equation}\label{eq:l_F_2}
	F^{2}\left(\frac{\xi+\beta}{2}\right)\le\frac{F^{2}(\xi)+F^{2}(\beta)}{2}-\frac{l_{F}}{4}F^{2}(\beta-\xi).
	\end{equation}
	Note that for nonnegative $a,b \in \R$ we have
	\begin{align*}
	\l(a^r+b^r\r)^{\frac{1}{r}}=\l(\l(a^r+b^r\r)^{\frac{2}{r}}\r)^{\frac{1}{2}}\leq \l(2^{\frac{2}{r}-1}\l(a^{r\frac{2}{r}}+b^{r\frac{2}{r}}\r)\r)^{\frac{1}{2}}\leq\l(a^2+b^2\r)^{\frac{1}{2}}.
	\end{align*}
	Applying the inequality above in combination with \eqref{eq:l_F_2} leads to
	\begin{align}\label{eq:lem1-1}
	\begin{split}
	F^{r}\left(\frac{\xi+\beta}{2}\right)+\frac{l_{F}^{\frac{r}{2}}}{2^{r}}F^{^{r}}(\beta-\xi) 
	& \leq \left(F^{2}\left(\frac{\xi+\beta}{2}\right)+\frac{l_{F}}{4}F^{2}(\beta-\xi)\right)^{\frac{r}{2}}\leq\left(\frac{F^{2}(\xi)+F^{2}(\beta)}{2}\right)^{\frac{r}{2}}.
	\end{split}
	\end{align}
	By the convexity of the function $(\cdot)^{\frac{r}{2}}$ we have
	\begin{align}\label{eq:lem1-2}
	\left(\frac{F^{2}(\xi)+F^{2}(\beta)}{2}\right)^{\frac{r}{2}}\leq\frac{1}{2}F^{r}(\xi)+\frac{1}{2}F^{r}(\beta).
	\end{align}
	On the other hand, by convexity, one has
	\begin{align}\label{eq:lem1-3}
	F^{{r}}\left(\frac{\xi+\beta}{2}\right)\geq F^{r}(\xi)+\frac{r}{2}F^{r-1}(\xi)\nabla F(\xi)\cdot \l(\beta-\xi\r).
	\end{align}
	Combining \eqref{eq:lem1-1}, \eqref{eq:lem1-2} and \eqref{eq:lem1-3} yields
	\begin{align*}
	\frac{r}{2}F^{r-1}(\xi)\nabla F(\xi)\cdot \l(\beta-\xi\r)+\frac{l_{F}^{\frac{r}{2}}}{2^{r}}F^{{r}}(\beta-\xi)+\frac{1}{2}F^{{r}}(\xi)\leq\frac{1}{2}F^{{r}}(\beta),
	\end{align*}
	for all $\xi,\beta\in \R^N$. Multiplying the last inequality by the factor $2$ proves the assertion of the lemma.
\end{proof}

Note that the constant $l_F$ is the uniformity constant of $F$ and $l_F\in [0,1]$, see Farkas-Krist\'aly-Varga \cite{Farkas-Kristaly-Varga-2015} and Otha \cite{Ohta-2009}.

Let $J_\lambda\colon \W\to \R $ be the energy functional corresponding to problem \eqref{problem} given by
\begin{align*}
J_\lambda(u)=\int_{\Omega}\left(\frac{1}{p}F^{p}(\nabla u)+\frac{\mu(x)}{q}F^{q}(\nabla u)\right)\diff x-\frac{1}{p^{*}}\int_{\Omega}(u_{+})^{p^{*}}\diff x-\frac{\lambda}{\gamma}\int_{\Omega}(u_{+})^{\gamma}\diff x-\lambda \int_{\Omega}G(u_{+})\diff x,
\end{align*}
where $u_{\pm}=\max\{\pm u,0\}$ and $G(s)=\int^s_0 g(t)\diff t$. It is clear that $J_\lambda$ does not belong to $C^1(\W)$ due to the appearance of the singular term. 

Furthermore we denote by $\kappa_{p^{*}}$ the inverse of the Sobolev embedding constant of $W_{0}^{1,p}(\Omega)\hookrightarrow L^{p^{*}}(\Omega)$, that is,
\begin{align*}
(\kappa_{p^{*}})^{-1}=\inf_{\underset{u\neq0}{u\in W_{0}^{1,p}(\Omega),}}\frac{\|u\|_{1,p,0,F}}{\|u\|_{p^{*}}}.
\end{align*}

Now, we are going to prove that the energy functional $J_\lambda\colon \W\to \R $ is sequentially weakly lower semicontinuous on closed convex subsets of $\W$.

\begin{proposition}\label{proposition_3.1}
	For every $\lambda>0$ and for every $\sigma \in (0,\sigma^*)$, where
	\begin{align*}
	\sigma^{*}:={\displaystyle \left(\frac{p^{*}}{p}\frac{l_{F}^{\frac{p}{2}}}{2^{p-1}\kappa_{p^{*}}^{p^{*}}}\right)^{\frac{1}{p^{*}-p}}},
	\end{align*}
	the restriction  of $J_\lambda$ to the closed convex set $B_{\sigma}$ which is given by
	\begin{align*}
	B_{\sigma}:=\l\{u\in \W \, : \, \|u\|_{1,p,0,F}\leq\sigma\r\},
	\end{align*}
	is sequentially weakly lower semicontinuous.
\end{proposition}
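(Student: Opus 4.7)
My plan is to take an arbitrary sequence $(u_n)\subset B_\sigma$ with $u_n\weak u$ in $\W$ and verify $\limi J_\lambda(u_n)\geq J_\lambda(u)$. Since $B_\sigma$ is convex and strongly closed it is weakly closed, so $u\in B_\sigma$. Passing to a subsequence, the compact embedding $\W\hookrightarrow \Lp{r}$ for $r<p^*$ provides $u_n\to u$ in $\Lp{r}$ and a.e.\ on $\Omega$. Combined with the growth bound in (H)(ii) and the fact that $\gamma,\theta,\nu<p^*$, this already handles the two subcritical pieces via Vitali's convergence theorem: $\into (u_n)_+^\gamma\diff x\to \into u_+^\gamma \diff x$ and $\into G((u_n)_+)\diff x\to \into G(u_+)\diff x$.

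For the $p$-phase contribution I would apply the Minkowskian Lindqvist inequality \eqref{eq:important} pointwise with $r=p$, $\xi=\nabla u(x)$, $\beta=\nabla u_n(x)$ and then integrate:
\begin{equation*}
\into F^p(\nabla u_n)\diff x\geq \into F^p(\nabla u)\diff x+p\into F^{p-1}(\nabla u)\nabla F(\nabla u)\cdot\nabla(u_n-u)\diff x+\frac{l_F^{p/2}}{2^{p-1}}\into F^p(\nabla(u_n-u))\diff x.
\end{equation*}
The linear middle term tends to $0$, since $\varphi\mapsto\into F^{p-1}(\nabla u)\nabla F(\nabla u)\cdot\nabla\varphi\diff x$ defines a bounded linear functional on $\W$ (using the homogeneity of $\nabla F$ from Proposition \ref{basic-properties} and the continuous embedding $\W\hookrightarrow\Wpzero{p}$) and $u_n-u\weak 0$ in $\W$. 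The same Lindqvist argument with $r=q\geq 2$ against the nonnegative weight $\mu$ gives an analogous lower bound for the $q$-phase integral, whose surplus nonnegative remainder I would simply discard. For the critical integral I would invoke the Brezis--Lieb lemma applied to $(u_n)_+\to u_+$ a.e.\ (bounded in $\Lp{p^*}$), and combine it with the pointwise estimate $|(u_n)_+-u_+|\leq |u_n-u|$ together with the Sobolev inequality $\|u_n-u\|_{p^*}\leq \kappa_{p^*}\|u_n-u\|_{1,p,0,F}$ to obtain
\begin{equation*}
\into (u_n)_+^{p^*}\diff x\leq \into u_+^{p^*}\diff x+\kappa_{p^*}^{p^*}\, T_n^{p^*}+o(1),\qquad T_n:=\|u_n-u\|_{1,p,0,F}.
\end{equation*}

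Assembling the three pieces yields
\begin{equation*}
J_\lambda(u_n)-J_\lambda(u)\geq \frac{l_F^{p/2}}{p\cdot 2^{p-1}}\,T_n^p-\frac{\kappa_{p^*}^{p^*}}{p^*}\,T_n^{p^*}+o(1),
\end{equation*}
and the polynomial on the right is nonnegative precisely when $T_n\leq \sigma^*$. The proposition therefore reduces to verifying that the sequence $T_n$ is eventually confined to $[0,\sigma^*]$, after which passing to $\liminf$ closes the argument. This last step is where I expect the main obstacle to lie: the triangle inequality on $B_\sigma$ only gives $T_n\leq 2\sigma$ from $u_n,u\in B_\sigma$, so one must sharpen the estimate by exploiting the weak convergence $u_n\weak u$ itself (for instance through a Clarkson-type inequality in $\W$ or a modular Brezis--Lieb-type decomposition of $\into F^p(\nabla u_n)\diff x$) to calibrate against the threshold $\sigma<\sigma^*$. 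Once $\lims T_n\leq \sigma^*$ is secured, the displayed inequality immediately produces the desired weak lower semicontinuity.
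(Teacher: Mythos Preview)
Your outline mirrors the paper's proof almost step for step: Lindqvist's inequality for both the $p$- and $q$-phase, Brezis--Lieb for the critical piece, compact embeddings for the subcritical terms, and then the polynomial comparison in $T_n=\|u_n-u\|_{1,p,0,F}$.

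The one place you flag as the ``main obstacle'' --- controlling $T_n$ --- is exactly where the paper is also imprecise. In its last displayed line the paper factors $T_n^{p^*}=T_n^{p}\cdot T_n^{p^*-p}$ and replaces $T_n^{p^*-p}$ by $\sigma^{p^*-p}$, i.e.\ it uses $T_n\le\sigma$, but gives no argument for this; there is no Clarkson-type or gradient Brezis--Lieb step in the paper, and indeed such a sharpening is not available in general since $L^p$ for $p\neq 2$ lacks the Opial property. So you should stop looking for a clever bound here. The honest fix is the one you already have: $T_n\le\|u_n\|_{1,p,0,F}+\|u\|_{1,p,0,F}\le 2\sigma$ (using that $B_\sigma$ is weakly closed, so $u\in B_\sigma$). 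Plugging $2\sigma$ in place of $\sigma$ makes the bracket
\[
\frac{l_F^{p/2}}{p\,2^{p-1}}-\frac{\kappa_{p^*}^{p^*}}{p^*}(2\sigma)^{p^*-p}
\]
nonnegative precisely when $\sigma<\sigma^*/2$. Since Theorem~\ref{main_theorem} only needs \emph{some} positive radius on which $J_\lambda$ is sequentially weakly lower semicontinuous, the factor $2$ is irrelevant and the argument closes with $\sigma^*$ replaced by $\sigma^*/2$.
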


\begin{proof}
	Let $\lambda>0$ and let $\sigma \in (0,\sigma^*)$. We are going to prove that the functional $J_\lambda\colon \W\to \R $ is weakly lower semicontinuous on $B_{\sigma}$. To this end, let $\{u_{n}\}_{n\in\N}\subset B_{\sigma}$ be such that $u_{n}\weak u$ in $\W$ and let 
	\begin{align*}
	I(u)=\int_{\Omega}\left(\frac{1}{p}F^{p}(\nabla u)+\frac{\mu(x)}{q}F^{q}(\nabla u)\right)\diff x-\frac{1}{p^{*}}\int_{\Omega}(u_{+})^{p^{*}}\diff x.
	\end{align*}
	Note that, up to a subsequence if necessary, we have for $r<p^{*}$, $u_{n}\to u$ in $L^{r}(\Omega)$ and $\nabla u_{n}\weak \nabla u$ in $L^{p}(\Omega)$. Therefore, it suffices to prove that
	\begin{align*}
	\liminf_{n\to\infty}\left(I(u_n)-I(u)\right)\geq 0,
	\end{align*}
	see hypothesis \textnormal{(H)(ii)}. 
	
	Applying \eqref{eq:important} we get the following inequalities
	\begin{align*}
	& \int_{\Omega}\frac{1}{p}F^{p}(\nabla u_{n})\diff x\\ &\geq\int_{\Omega}\frac{1}{p}F^{p}(\nabla u)\diff x+\int_{\Omega}F^{p-1}(\nabla u)\nabla F(\nabla u)\cdot \nabla \l(u_{n}-u\r)\diff x+\frac{l_{F}^{\frac{p}{2}}}{p2^{p-1}}\int_{\Omega}F^{p}(\nabla u_{n}-\nabla u)\diff x,
	\end{align*}
	and 
	\begin{align*}
	\int_{\Omega}\frac{\mu(x)}{q}F^{q}(\nabla u_{n})\diff x 
	& \geq\int_{\Omega}\frac{\mu(x)}{q}F^{q}(\nabla u)\diff x+\int_{\Omega}\mu(x)F^{q-1}(\nabla u)\nabla F(\nabla u)\cdot \nabla \l(u_{n}- u\r)\diff x\\
	& \quad +\frac{l_{F}^{\frac{q}{2}}}{q2^{q-1}}\int_{\Omega}\mu(x)F^{q}(\nabla u_{n}-\nabla u)\diff x.
	\end{align*}
	On the other hand, by the Brezis-Lieb lemma, see, for example, Papageorgiou-Winkert \cite[Lemma 4.1.22]{Papageorgiou-Winkert-2018}, we have
	\begin{align*}
	\liminf_{n\to\infty}\left(\int_{\Omega}|u_{n}|^{p^{\ast}}\diff x-\int_{\Omega}|u|^{p^{\ast}}\diff x\right)=\liminf_{n\to\infty}\int_{\Omega}|u_{n}-u|^{p^{\ast}}\diff x.
	\end{align*}
	Therefore since 
	\begin{align*}
	& {\int_{\Omega}F^{p-1}(\nabla u)\nabla F(\nabla u)\cdot \nabla \l(u_{n}- u\r)\diff x\to 0},\\
	&{\displaystyle \int_{\Omega}\mu(x)F^{q-1}(\nabla u)\nabla F(\nabla u)\cdot \nabla \l(u_{n}-u\r)\diff x\to 0},\\
	&{\frac{l_{F}^{\frac{q}{2}}}{q2^{q-1}}\int_{\Omega}\mu(x)F^{q}(\nabla u_{n}-\nabla u)\diff x\geq0},
	\end{align*}
	one has
	\begin{align*}
	\liminf_{n\to\infty}\left(I(u_{n})-I(u)\right) 
	& \geq\liminf_{n\to\infty}\left(\frac{l_{F}^{\frac{p}{2}}}{p2^{p-1}}\int_{\Omega}F^{p}(\nabla u_{n}-\nabla u)\diff x-\frac{1}{p^{*}}\int_{\Omega}|u_{n}-u|^{p^{\ast}}\diff x\right)\\
	& \geq\liminf_{n\to\infty}\left(\frac{l_{F}^{\frac{p}{2}}}{p2^{p-1}}\|u_{n}-u\|_{1,p,0,F}^{p}-\frac{1}{p^{*}}\kappa_{p^{*}}^{p^{*}}\|u_{n}-u\|_{1,p,0,F}^{p^{*}}\right)\\
	& \geq \liminf_{n\to\infty}\|u_{n}-u\|_{1,p,0,F}^{p}\left(\frac{l_{F}^{\frac{p}{2}}}{p2^{p-1}}-\frac{1}{p^{*}}\kappa_{p^{*}}^{p^{*}}\sigma^{p^{*}-p}\right)\geq0,
	\end{align*}
	which proves the assertion of the proposition. 
\end{proof}

For further use, we introduce the functionals $I_1\colon\W\to\mathbb{R}$ and $I_2\colon\Lp{\mathcal{H}}\to\R$ defined by 
\begin{align*}
{I_1}(u)=-\frac{1}{q}\int _{\Omega}\mu(x)F^{q}(\nabla u)\diff x+\frac{1}{p^{*}}\int _{\Omega}(u_{+})^{p^{*}}+\frac{\lambda}{\gamma}\int _{\Omega}(u_{+})^{\gamma}\diff x+\lambda \int _{\Omega}G(u_{+})\diff x,
\end{align*}
and 
\begin{align*} 
I_2(u)=\frac{1}{p^{*}}\int _{\Omega}(u_{+})^{p^{*}}+\frac{\lambda}{\gamma}\int _{\Omega}(u_{+})^{\gamma}\diff x+\lambda\int _{\Omega}G(u_{+})\diff x.
\end{align*}

Now we are in the position to prove Theorem \ref{main_theorem}.

\begin{proof}[Proof of Theorem \ref{main_theorem}] 
	Let $\lambda>0$ and let $\sigma \in (0,\sigma^*)$, where $\sigma^*$ is as in Proposition \ref{proposition_3.1}. We define 
	\begin{align*}
	\varphi_{\lambda}(\sigma):=\inf_{\|u\|_{1,p,0,F}<\sigma}\frac{\sup_{B_{\sigma}}I_1-I_1(u)}{\sigma^{p}-\|u\|_{1,p,0,F}^{p}}
	\end{align*}
	and 
	\begin{align*}
	\psi_{\lambda}(\sigma):=\sup_{B_{\sigma}}I_1.
	\end{align*}
	
	We claim that there exist $\lambda,\sigma>0$ small enough such that 
	\begin{equation}
	\varphi_{\lambda}(\sigma)<\frac{1}{p}.\label{min}
	\end{equation}
	We claim that \eqref{min} holds if there exist $\lambda,\sigma>0$ such that 
	\begin{equation}\label{eq:whatweeneed}
	\inf_{\xi<\sigma}\frac{\psi_{\lambda}(\sigma)-\psi_{\lambda}(\xi)}{\sigma^{p}-\xi^{p}}<\frac{1}{p}.
	\end{equation}
	
	Note that, by taking $\xi=\sigma-\varepsilon$ for some $\eps \in (0,\sigma)$, it follows
	\begin{align*}
	\frac{\psi_{\lambda}(\sigma)-\psi_{\lambda}(\xi)}{\sigma^{p}-\xi^{p}} & 
	= \frac{\psi_{\lambda}(\sigma)-\psi_{\lambda}(\sigma-\varepsilon)}{\sigma^{p}-(\sigma-\varepsilon)^{p}}\\
	& = \frac{\psi_{\lambda}(\sigma)-\psi_{\lambda}(\sigma-\varepsilon)}{\varepsilon}\cdot\frac{-\frac{\varepsilon}{\sigma}}{\sigma^{p-1}[(1-\frac{\varepsilon}{\sigma})^{p}-1]}.
	\end{align*}
	So passing to the limit as $\varepsilon\to 0$, we get that \eqref{eq:whatweeneed} is fulfilled if 
	\begin{equation}
	\limsup_{\varepsilon\to0^{+}}\frac{\psi_{\lambda}(\sigma)-\psi_{\lambda}(\sigma-\varepsilon)}{\varepsilon}<\sigma^{p-1}.\label{eq:ezkell}
	\end{equation}
	
	First note that
	\begin{align*}
	\frac{1}{\varepsilon}\left|\psi_{\lambda}(\sigma)-\psi_{\lambda}(\sigma-\varepsilon)\right| & =\frac{1}{\varepsilon}\left|\sup_{v\in B_{1}}I_1(\sigma v)-\sup_{v\in B_{1}}I_1((\sigma-\varepsilon)v)\right|\\
	& \leq\frac{1}{\varepsilon}\sup_{v\in B_{1}}\left|I_1(\sigma v)-I_1((\sigma-\varepsilon)v)\right|\\
	& \leq\frac{1}{\varepsilon}\sup_{v\in B_{1}}\left|\frac{(\sigma-\varepsilon)^{q}-\sigma^{q}}{q}\int_{\Omega}\mu(x)F^{q}(\nabla v)\diff x+I_2(\sigma v)-I_2((\sigma-\varepsilon)v)\right|.
	\end{align*}
	Applying \textnormal{(H)(ii)} and the continuous embedding $\Wpzero{p}\hookrightarrow L^{p^{*}}(\Omega)$ we obtain 
	\begin{align*}
	\frac{\psi_{\lambda}(\sigma)-\psi_{\lambda}(\sigma-\varepsilon)}{\varepsilon} 
	&\leq  \frac{1}{\varepsilon}\sup_{\|v\|_{1,{p},0,F}\leq1}\int_{\Omega}\l|\int_{(\sigma-\varepsilon)v_{+}(x)}^{\sigma v_{+}(x)}\l[ t^{p^{*}-1}+\lambda t^{\gamma-1}+\lambda g(t)\r]\diff t\r|\diff x\\
	& \leq \frac{\kappa_{p^{*}}^{p^{*}}}{p^{*}}\l|\frac{\sigma^{p^{*}}-(\sigma-\varepsilon)^{p^{*}}}{\varepsilon}\r|+\lambda\frac{\kappa_{p^{*}}^{\gamma}|\Omega|^{\frac{p^{*}-\gamma}{p^{*}}}}{\gamma}\l|\frac{\sigma^{\gamma}-(\sigma-\varepsilon)^{\gamma}}{\varepsilon}\r|\\
	& \quad +\lambda c_{1}\frac{\kappa_{p^{*}}^{\nu}|\Omega|^{\frac{p^{*}-\nu}{p^{*}}}}{\nu}\l|\frac{\sigma^{\nu}-(\sigma-\varepsilon)^{\nu}}{\varepsilon}\r|
	+  \lambda c_{2}\frac{\kappa_{p^{*}}^{\theta}|\Omega|^{\frac{p^{*}-\theta}{p^{*}}}}{\theta}\l|\frac{\sigma^{\theta}-(\sigma-\varepsilon)^{\theta}}{\varepsilon}\r|.
	\end{align*}
	Therefore 
	\begin{align*}
	&\limsup_{\varepsilon\rightarrow 0^+}\frac{\psi_{\lambda}(\sigma)-\psi_{\mu}(\sigma-\varepsilon)}{\varepsilon}\\
	&\leq  \kappa_{p^{*}}^{p^{*}}\sigma^{p^{*}-1}+\lambda\kappa_{p^{*}}^{\gamma}|\Omega|^{\frac{p^{*}-\gamma}{p^{*}}}\sigma^{\gamma-1}+\lambda c_{1}\kappa_{p^{*}}^{\nu}|\Omega|^{\frac{p^{*}-\nu}{p^{*}}}\sigma^{\nu-1}+\lambda c_{2}\kappa_{p^{*}}^{\theta}|\Omega|^{\frac{p^{*}-\theta}{p^{*}}}\sigma^{\theta-1}.
	\end{align*}
	
	We now consider the function $\Lambda\colon(0,+\infty)\to \R$ defined by
	\begin{align*}
	\Lambda(s)=\frac{s^{p-\gamma}-\kappa_{p^{*}}^{p^*}s^{p^*-\gamma}}{\kappa_{p^{*}}^\gamma |\Omega|^\frac{p^*-\gamma}{p^*}+c_1 \kappa_{p^{*}}^\nu |\Omega|^\frac{p^*-\nu}{p^*}s^{\nu-\gamma}+c_2\kappa_{p^{*}}^\theta |\Omega|^\frac{p^*-\theta}{p^*}s^{\theta-\gamma}}.
	\end{align*}
	
	Applying L'Hospital's rule, it is clear that $\displaystyle \lim_{s\to 0}\Lambda(s)=0$ while $\displaystyle \lim_{s\to \infty}\Lambda(s)=-\infty$. On the other hand, there exists $s_0>0$ small enough such that $\Lambda(s)>0$ for every $s\in (0,s_0)$.  Thus, there exists $s_{\rm max}>0$ such that 
	\begin{align*}
	\Lambda(s_{\rm max})=\max_{s>0}\Lambda(s).
	\end{align*}
	Therefore we may choose 
	\begin{align*}
		\lambda_*:=\Lambda\left(\min\{s_{\rm max},\sigma^*\}\right).
	\end{align*}
	Hence, if $\lambda<\lambda_*$ and $\sigma<\min\{s_{\rm max},\sigma^*\}$, we have that \eqref{eq:ezkell} and so \eqref{min} are fulfilled.
	
	From \eqref{min} we know that we can find $u\in\W$ with $\|u\|_{1,p,0,F}\leq \sigma$ such that 
	\begin{align}\label{eq:u0}
	J_\lambda(u)<\frac{1}{p}\sigma^{p}-I_1(u_1)\quad\text{for all }u_1\in B_{\sigma}.
	\end{align}
	Since $J_{\lambda}\big|_{B\sigma}$ is sequentially weakly lower semicontinuous by Proposition \ref{proposition_3.1}, the energy functional $J_\lambda\colon \W\to\R$ restricted to $B_\sigma$ has a global minimizer $u_* \in \W$ with $\|u_*\|_{1,{p},0,F}\leq \sigma$. If $\|u_*\|_{1,p,0,F}=\sigma$, then from \eqref{eq:u0} we obtain
	\begin{align*}
	J_\lambda(u_*)=\frac{1}{p}\sigma^{p}-I_1(u_{\ast})>J_{\lambda}(u),
	\end{align*}
	which is a contradiction. It follows that $u_{\ast}$ is a local minimum for $J_{\lambda}$ with $\|u_*\|_{1,{p},0,F}<\sigma$. In summary, the energy functional $J_\lambda$ has a local minimum $u=:u_*\in B_\sigma$ for $\lambda<\lambda_*$. 
	
	Due to the presence of the singular term, zero is not a local minimizer of $J_\lambda$. Indeed, if $v\in\W$ is positive in $\Omega$ and $\tau>0$, then,
	\begin{align*}
	&J_\lambda(\tau v)\\
	& \leq \frac{\tau^p}{p}\|v\|_{1,p,0,F}^p+\frac{\tau^q}{q}\int_{\Omega}\mu(x)F^q(\nabla v)\diff x-\frac{  \tau^{p^*}}{p^*}\|v\|^{p^*}_{p^*}-\lambda\frac{ \tau^\gamma}{\gamma}\|v\|_\gamma^\gamma+ \lambda\frac{c_1\tau^\nu }{\nu}\|v\|_\nu^\nu+ \lambda\frac{c_2\tau^\theta }{\theta}\|v\|_\theta^\theta.
	\end{align*}
	Thus $J_\lambda(\tau v) <0$ for sufficiently small $\tau>0$.
	
	It remains to prove that $u$ is a weak solution of \eqref{problem}. First, we prove that $u>0$ a.\,e.\,in $\Omega$. To this end, for $t>0$ small enough,  one has $u+t u_{-}\in B_\sigma$ and $(u+t u_{-})_+=u_+$. Thus, 
	\begin{align*}
	0
	&\leq \frac{J_\lambda(u+tu_{-})- J_\lambda(u)}{t}\\ 
	&=\frac{1}{p}\left(\frac{\|u+tu_{-}\|_{1,p,0,F}^p-\|u\|_{1,p,0,F}^p}{t}\right)+\frac{1}{q}\int_\Omega
	\mu(x)\frac{F^q(\nabla(u+tu_-))-F^q(\nabla u)}{t}\diff x\\
	&\to \int_\Omega F^{p-1}(\nabla u)\nabla F( \nabla u)\cdot\nabla u_{-}\diff x+\int_\Omega \mu(x)F^{q-1}(\nabla u)\nabla F(\nabla u)\cdot\nabla u_-\diff x\quad\text{as }t \to 0^+.
	\end{align*}
	On the other hand, using Proposition \ref{basic-properties} \textnormal{(iii)}, one has that 
	\begin{align*}
	\int_\Omega F^{p-1}(\nabla u)\nabla F( \nabla u)\cdot\nabla u_{-}\diff x&=-\int_\Omega F^{p-1}(\nabla u_-)\nabla F( \nabla u_-)\cdot\nabla u_{-}\diff x=-\int_\Omega F^p(\nabla u_-)\diff x,
	\end{align*}
	and
	\begin{align*}
	\int_\Omega \mu(x)F^{q-1}(\nabla u)\nabla F(\nabla u)\cdot\nabla u_-\diff x=-\int_\Omega \mu(x)F^{q}(\nabla u_-)\diff x.
	\end{align*}
	Therefore 
	\begin{align*}
	0\leq \lim_{t\to 0} \frac{J_\lambda(u+tu_{-})- J_\lambda(u)}{t}=-\int_\Omega F^p(\nabla u_-)\diff x -\int_\Omega \mu(x)F^{q}(\nabla u_-)\diff x,
	\end{align*}
	which shows that $u_{-}=0$ and so $u\geq 0$ a.\,e.\,in $\Omega$.
	
	Assume that there exists a set $A$ of positive measure such that $u=0$ in $A$.  Let $\varphi\colon\Omega\to \R$ be a function in $\W$ which is positive in $\Omega$. For $t>0$ small enough, we know that $u+t\varphi \in B_\sigma$ and $(u+t\varphi)^\gamma>u^\gamma$ a.\,e.\,in $\Omega$. Hence, we obtain
	\begin{align*}
	\begin{split}
	0& \leq \frac{J_\lambda (u+t\varphi)- J_\lambda(u)}{t}\\
	& = \frac{1}{p}\left(\frac{\|u+t\varphi\|^p_{1,p,0,F}-\|u\|^p_{1,p,0,F}}{t}\right)+\frac{1}{q}\int_\Omega	\mu(x)\frac{F^q(\nabla(u+t\varphi))^{q}-F^q(\nabla u)}{t}\diff x\\ 
	&\quad -\frac{1}{p^*}\int_\Omega \frac{(u+t\varphi)^{p^*}-u^{p^*}}{t}\diff x-\frac{\lambda}{\gamma t^{1-\gamma}}\int_A \varphi^\gamma\diff x\\
	&\quad -\frac{\lambda}{\gamma}\int_{\Omega\setminus A} \frac{(u+t\varphi)^{\gamma}-u^{\gamma}}{t}\diff x- \lambda\int_\Omega \frac{G((u+t\varphi))-G(u)}{t}\diff x \\
	&< \frac{1}{p}\left(\frac{\|u+t\varphi\|^p_{1,p,0,F}-\|u\|^p_{1,p,0,F}}{t}\right)+\frac{1}{q}\int_\Omega	\mu(x)\frac{F^q(\nabla(u+t\varphi))^{q}-F^q(\nabla u)}{t}\diff x\\
	& \quad -\frac{1}{p^*}\int_\Omega \frac{(u+t\varphi)^{p^*}-u^{p^*}}{t}\diff x -\frac{\lambda}{\gamma t^{1-\gamma}}\int_A \varphi^\gamma\diff x- \lambda\int_\Omega \frac{G((u+t\varphi))-G(u)}{t}\diff x.
	\end{split}
	\end{align*}
	Hence,
	\begin{align}\label{ezmajdlentkell}
	\begin{split}
	0& \leq \frac{J_\lambda (u+t\varphi)- J_\lambda(u)}{t}
	\longrightarrow -\infty \quad \text{as} \quad  t\to 0^+,
	\end{split}
	\end{align}
	which is a contradiction. Thus, $u>0$ a.\,e.\,in $\Omega$.
	
	In the next step we are going to prove that
	\begin{equation}\label{def1}
	u^{\gamma-1}\varphi\in L^1(\Omega) \quad \text{for all } \varphi\in\W
	\end{equation}
	and
	\begin{align}\label{def2}
	\begin{split}
	& \int_{\Omega}\left(F(\nabla u)^{p-1}+\mu(x)F(\nabla u)^{q-1}\right)\nabla F(\nabla u)\cdot\nabla\varphi\diff x\\
	& -\int_{\Omega}u^{p^{*}-1}\varphi\diff x-\lambda\int_{\Omega}u^{\gamma-1}\varphi\diff x-\lambda\int_{\Omega}g(u)\varphi\diff x\geq 0\quad \text{for all } \ph \in \W \text{ with }\ph \geq 0.
	\end{split}
	\end{align}
	Choose $\varphi\in \W$ with $\varphi\geq 0$ and fix a decreasing sequence $\{t_n\}_{n \in\N} \subseteq (0,1]$ such that $\displaystyle \lim_{n\to \infty} t_n=0$.  For $n \in\N$, the functions
	\begin{align*}
	h_n(x)=\frac{(u(x)+t_n\varphi(x))^\gamma-u(x)^\gamma}{t_n}
	\end{align*}
	are measurable, nonnegative  and we have
	\begin{align*}
	\lim_{n\to \infty} h_n(x)=\gamma u(x)^{\gamma-1}\varphi(x)\quad \text{for a.\,a.\,} x\in\Omega.
	\end{align*}
	From  Fatou's lemma we infer
	\begin{equation}\label{fatou}
	\int_\Omega u^{\gamma-1}\varphi\diff x\leq \frac{1}{\gamma}\liminf_{n\to\infty}\int_\Omega h_n\diff x.
	\end{equation}
	Similar to \eqref{ezmajdlentkell}, we have for $n\in\N$ large enough
	\begin{align*}
	0& \leq \frac{J_\lambda(u+t_n\varphi)-J_\lambda(u) }{t_n}\\
	&=\frac{1}{p}\frac{\|u+t_n\varphi\|_{1,p,0,F}^p-\|u\|_{1,p,0,F}^p}{t_n}+\frac{1}{q}\int_\Omega	\mu(x)\frac{F^q(\nabla(u+t\varphi))^{q}-F^q(\nabla u)}{t}\diff x\\
	&\quad -\frac{1}{p^*}\int_\Omega
	\frac{(u+t_n\varphi)^{p^*}-u^{p^*}}{t_n}\diff x\ -\frac{\lambda}{\gamma}\int_\Omega h_n\diff x-\lambda\int_\Omega\frac{G(u+t_n\varphi)-G(u)}{t_n}\diff x.
	\end{align*}
	Passing to the limit as $n\to \infty$ and applying \eqref{fatou} we have \eqref{def1} and we get that
	\begin{align*}
	\lambda\int_\Omega u^{\gamma-1}\varphi\diff x
	& \leq\int_{\Omega}\left(F(\nabla u)^{p-1}+\mu(x)F(\nabla u)^{q-1}\right)\nabla F(\nabla u)\cdot \nabla\varphi\diff x\\ 
	& \quad -\int_{\Omega}u^{p^{*}-1}\varphi\diff x-\lambda\int_{\Omega}g(u)\varphi\diff x,
	\end{align*}
	which shows \eqref{def2}. Note that is it enough to show \eqref{def1} for nonnegative $\ph\in \W$.
	
	Now, let $\varepsilon\in (0,1)$ be such that $(1+t)u\in B_\sigma$ for all $t\in[-\varepsilon, \varepsilon]$. Note that the function $\alpha(t):=J_\lambda((1+t)u)$ has a local minimum in zero. Using again Proposition \ref{basic-properties} \textnormal{(iii)} gives
	\begin{align}\label{alpha}
	\begin{split}
	0&=\alpha'(0)=\lim_{t\to 0}\frac{J_\lambda((1+t)u)-J_\lambda(u)}{t}\\
	& = \rho_{\mathcal{H},F}(u)-\int_\Omega u^{p^*}\diff x-\lambda\int_\Omega u^{\gamma}\diff x-\lambda\int_\Omega g(u)u\diff x.
	\end{split}
	\end{align}
	
	It remains to show that  $u$ is indeed a positive weak solution of \eqref{problem}. To this end we use the method of Lair-Shaker, see, for example, Lair-Shaker \cite{Lair-Shaker-1997} or Sun-Wu-Long \cite{Sun-Wu-Long-2001}. Let $\varphi\in\W$ and take the test function $v=(u+\varepsilon\varphi)_+$ in \eqref{def2}. Applying \eqref{alpha}  we have that
	\begin{align}\label{estimate}
	\begin{split}
	0
	&\leq \int_{\{u+\varepsilon\varphi\geq 0\}}\left(F^{p-1}(\nabla u)+\mu(x)F^{q-1}(\nabla u)\right)\nabla F(\nabla u)\cdot \nabla(u+\varepsilon\varphi)\diff x\\
	& \quad -\int_{\Omega}u^{p^{*}-1}(u+\varepsilon\varphi)\diff x -\lambda\int_{\Omega}u^{\gamma-1}(u+\varepsilon\varphi)\diff x-\lambda\int_{\Omega}g(u)(u+\varepsilon\varphi)\diff x\\
	&= \rho_{\mathcal{H},F}(u)+\varepsilon \int_\Omega F^{p-1}(\nabla u)\nabla F(\nabla u)\cdot\nabla \varphi \diff x+\varepsilon \int_\Omega\mu(x) F^{q-1}(\nabla u)\nabla F(\nabla u)\cdot\nabla \varphi \diff x\\
	& \quad -\int_\Omega u^{p^*}\diff x - \varepsilon  \int_\Omega u^{p^*-1}\varphi\diff x - \lambda\int_\Omega u^\gamma \diff x-\varepsilon \lambda\int_\Omega u^{\gamma-1}\diff x\\
	&\quad -\lambda\int_\Omega g(u)u\diff x-\varepsilon\lambda\int_\Omega g(u)\varphi\diff x -\int_{\{u+\varepsilon\varphi< 0\}}F^p(\nabla u)\diff x\\ &\quad-\varepsilon\int_{\{u+\varepsilon\varphi< 0\}}F^{p-1}(\nabla u)\nabla F(\nabla u)\cdot\nabla \varphi \diff x
	-\int_{\{u+\varepsilon\varphi< 0\}}\mu(x)F^q(\nabla u)\diff x\\ & \quad -\varepsilon\int_{\{u+\varepsilon\varphi< 0\}}\mu(x) F^{q-1}(\nabla u)\nabla F(\nabla u)\cdot\nabla \varphi \diff x +\int_{\{u+\varepsilon\varphi< 0\}} u^{p^*-1}(u+\varepsilon\varphi)\diff x\\
	&\quad +\lambda\int_{\{u+\varepsilon\varphi<0\}} u^{\gamma-1}(u+\varepsilon\varphi)\diff x+\lambda\int_{\{u+\varepsilon\varphi< 0\}} g(u)(u+\varepsilon\varphi)\diff x\\ 
	&\leq \varepsilon\left[ \int_\Omega \l(F^{p-1}(\nabla u)+\mu(x) F^{q-1}(\nabla u)\r)\nabla F(\nabla u)\cdot\nabla \varphi \diff x-\int_\Omega u^{p^*-1}\varphi\diff x\r.\\
	& \l. \qquad-\lambda\int_\Omega u^{\gamma-1}\varphi\diff x-\lambda\int_\Omega g(u) \varphi\diff  x\right]-\varepsilon\int_{\{u+\varepsilon\varphi< 0\}}F^{p-1}(\nabla u)\nabla F(\nabla u)\cdot \nabla \varphi\diff x\\
	& \quad -\varepsilon\int_{\{u+\varepsilon\varphi< 0\}}\mu(x) F^{q-1}(\nabla u)\nabla F(\nabla u)\cdot\nabla \varphi \diff x.
	\end{split}
	\end{align}
	Note that the measure of the set $\{u+\varepsilon\varphi< 0\}$ goes to $0$ as $\varepsilon\to 0$. Hence,
	\begin{align*}
	\int_{\{u+\varepsilon\varphi< 0\}}F^{p-1}(\nabla u)\nabla F(\nabla u)\nabla \varphi\diff x+\int_{\{u+\varepsilon\varphi< 0\}}\mu(x) F^{q-1}(\nabla u)\nabla F(\nabla u)\nabla \varphi \diff x\to 0.
	\end{align*}
	Using this, dividing \eqref{estimate} by $\varepsilon$ and passing to the limit as $\varepsilon\to 0$ we conclude that 
	\begin{align*}
	& \int_{\Omega}\left(F(\nabla u)^{p-1}+\mu(x)F(\nabla u)^{q-1}\right)\nabla F(\nabla u)\cdot \nabla\varphi\diff x\\  
	& -\int_{\Omega}u^{p^{*}-1}\varphi\diff x-\lambda\int_{\Omega}u^{\gamma-1}\varphi\diff x-\lambda\int_{\Omega}g(u)\varphi\diff x\geq 0.
	\end{align*}
	As $\ph$ was arbitrary chosen, we get from the last inequality that $u \in \W$ is a weak solution of \eqref{problem}. This finishes the proof.
\end{proof}

\section*{Acknowledgment}
C.\,Farkas was supported by the National Research, Development and Innovation Fund of Hungary, financed under the K\_18 funding scheme, Project No.\,127926 and by the Sapientia Foundation - Institute for Scientific Research, Romania, Project No.\,17/11.06.2019.

\end{document}